\crefname{equation}{}{}
\crefname{lemma}{Lemma}{Lemmas}
\crefname{theorem}{Theorem}{Theorems}
\crefname{discr}{Discretization}{Discretizations}
\apptocmd{\sloppy}{\hbadness 10000\relax}{}{}
\newcommand{\dual}[1]{\langle {#1} \rangle}
\newcommand{\nm}[1]{\lVert {#1} \rVert}
\newcommand{\Nm}[1]{\left\lVert {#1} \right\rVert}
\newcommand{\nmb}[1]{\big\lVert {#1} \big\rVert}
\newcommand{\nmB}[1]{\Big\lVert {#1} \Big\rVert}
\newcommand{\ssnm}[1]
{
  \left\vert\kern-0.25ex
  \left\vert\kern-0.25ex
  \left\vert
  {#1}
  \right\vert\kern-0.25ex
  \right\vert\kern-0.25ex
  \right\vert
}
\newcommand{\ssnmb}[1]
{
  \big\vert\kern-0.25ex
  \big\vert\kern-0.25ex
  \big\vert
  {#1}
  \big\vert\kern-0.25ex
  \big\vert\kern-0.25ex
  \big\vert
}
\newcommand{\ssnmB}[1]
{
  \Big\vert\kern-0.25ex
  \Big\vert\kern-0.25ex
  \Big\vert
  {#1}
  \Big\vert\kern-0.25ex
  \Big\vert\kern-0.25ex
  \Big\vert
}
\def\spher@harm#1{%
  \vbox{\hbox{%
    \offinterlineskip
    \valign{&\hb@xt@2\p@{\hss$##$\hss}\vskip.2ex\cr#1\crcr}%
  }\vskip-.36ex}%
}
\def\gshone{\spher@harm{.}}
\def\gshtwo{\spher@harm{.&.}}
\def\gshthree{\spher@harm{.&.&.}}
\let\gsh\spher@harm
\newtheorem{lemma}{Lemma}[section]
\newtheorem{remark}{Remark}[section]
\newtheorem{theorem}{Theorem}[section]
\def\@captype{table}\makeatother
\begin{document}

\title{Convergence of a spatial semi-discretization for a backward
semilinear stochastic parabolic equation\thanks{This work was supported
in part by the National Natural Science Foundation of China   (11901410, 12171340) and
the Fundamental Research Funds for the Central Universities
in China  (2020SCU12063).}
}

\author{Binjie Li\thanks{libinjie@scu.edu.cn}}
\author{Xiaoping Xie\thanks{Corresponding author: xpxie@scu.edu.cn}}
\affil{School of Mathematics, Sichuan University, Chengdu 610064, China}


\date{}
\maketitle

\begin{abstract} 
  This paper studies the convergence of a spatial semi-discretization for a
  backward semilinear stochastic parabolic equation. The filtration is general,
  and the spatial semi-discretization uses the standard continuous piecewise
  linear finite element method. Firstly, higher regularity of the solution to
  the continuous equation is derived. Secondly, the first-order spatial accuracy
  is derived for the spatial semi-discretization. Thirdly, an application of the
  theoretical results to a stochastic linear quadratic control problem is
  presented.
\end{abstract}

\medskip{\bf Keywords}. backward semilinear stochastic parabolic equation,
Brownian motion, semi-discretization, stochastic linear quadratic control.

\medskip{\bf AMS subject classifications}. 49M25, 65C30, 60H35, 65K10

\section{Introduction}
Let $ (\Omega, \mathcal F, \mathbb P) $ be a given complete probability space
with a normal filtration $ \mathbb F = \{\mathcal F_t\}_{t \geqslant 0 } $ (i.e.,
$ \mathbb F $ is right continuous and $ \mathcal F_0 $ contains all $ \mathbb P
$-null sets of $ \mathcal F $). Assume that $ W(\cdot) $ is an $ \mathbb F
$-adapted one-dimensional real Brownian motion. Let $ \mathcal O \subset \mathbb
R^d $ ($d = 1, 2,3$) be a bounded convex polygonal domain, and let $ \Delta $ be
the realization of the Laplace operator with homogeneous Dirichlet boundary
condition in $ L^2(\mathcal O) $. We consider the following backward semilinear
stochastic parabolic equation:
\begin{equation}
  \label{eq:model}
  \begin{cases}
    \mathrm{d}p(t) = -( \Delta p(t) + f(t,p(t),z(t)) ) \, \mathrm{d}t +
    z(t) \, \mathrm{d}W(t), \quad 0 \leqslant t \leqslant T, \\
    p(T) = p_T,
  \end{cases}
\end{equation}
where $ p_T \in L^2(\Omega, \mathcal F_T, \mathbb P; \dot H^1) $ and $ f $
satisfies the following conditions:
\begin{itemize}
  \item $ f(\cdot, p, z) \in L_\mathbb F^2(0, T;H) $ for each $ p, z
    \in H $;
  \item there exists a positive constant $ \mathcal M $ such that, $ \mathbb P
    $-almost surely for almost every $ t \in [0,T] $,
    \begin{equation} 
      \label{eq:f-Lips}
      \nm{f(t,p_1,z_1) - f(t,p_2,z_2)}_{H} \leqslant
      \mathcal M(\nm{p_1-p_2}_{H} + \nm{z_1-z_2}_{H})
    \end{equation}
    for all $ p_1, p_2, z_1, z_2 \in H $.
\end{itemize}
The above notations are defined later in \cref{sec:pre}.

Bismut \cite{Bismut1973,Bismut1978} proposed the finite-dimensional linear
backward stochastic differential equations (BSDEs for short) and used them to
form the stochastic maximum principle for finite-dimensional stochastic optimal
control problems. Bensoussan \cite{Bensoussan1983} used the infinite-dimensional
linear BSDEs to form the maximum stochastic principle of stochastic distributed
parameter systems. Later, Pardoux and Peng \cite{Pardoux_peng1990} made a
significant breakthrough by establishing the well-posedness of a class of
finite-dimensional nonlinear BSDEs, and soon Hu and Peng \cite{Hu_Peng_1991}
proposed a highly non-trivial extension to the infinite-dimensional BSDEs.
Since then the theory of BSDEs began to develop quickly, mainly motivated by
applications to stochastic optimal control, partial differential equations and
mathematical finance; see \cite{Karoui1997,Ma_Yong1999,Pardoux2014,Peng1993,
Yong1999} and the references therein. We particularly refer the reader to
\cite[Chapter 6]{Fabbri2017} and \cite{LuZhangbook2021} and the references
therein for the applications of the infinite-dimensional BSDEs to the stochastic
optimal control problems.

The above mentioned works on the BSDEs all require that the filtration is
generated by the underlying Wiener process. Motivated by the transposition
method for non-homogeneous boundary value problems for partial differential
equations (see \cite{Lions1972I}), L\"u and Zhang \cite{LuZhang2013,Lu2014}
proposed a new notion of solution, the transposition solution, to BSDEs with
general filtration. The transposition solution coincides with the usual strong
solution when the filtration is natural. The transposition solution has been
successfully used to investigate the stochastic maximal principle for the
infinite-dimensional distributed parameter systems; see
\cite{LuZhang2019,LuZhang2021} and the references therein.

By now, the numerical solutions of the finite-dimensional BSDEs have been
extensively studied; see \cite{Bouchard2004,Chassagneux2014,Chassagneux2015,
E2019,Hu_Dualart_2011,Imkeller2010,Ma1994,
Zhang2004} and the references therein. In particular, we note that, based on the
definition of the transposition solution, Wang and Zhang \cite{Wang_Zhang_2011}
proposed a numerical method for solving finite-dimensional BSDEs. In view of the
fact that the numerical analysis of the infinite-dimensional stochastic
differential equations (SDEs for short) differs considerably from that of the
finite-dimensional SDEs, generally it is difficult to extend the numerical analysis of
finite-dimensional BSDEs to the infinite-dimensional BSDEs directly. We
note that there is a huge list of papers in the literature on the numerical
analysis of the infinite-dimensional SDEs; see
\cite{Barth2012,Beccari2019,Brehier2019,
Brehier2020, Cao2017, Carelli2012, Cui_Hong_2019, Cui_Hong_sun2021,
Cui_Hong_sun2021b,Du2002,Feng2017, Feng2021,
Kruse2014, Krusebook2014, Yan2005, Zhangbook2017}
and the references
therein. Despite this fact, the numerical analysis of the infinite-dimensional
nonlinear BSDEs is expected to be a challenging problem, since the SDEs and the
BSDEs are essentially different.


So far, the numerical analysis of the infinite-dimensional BSDEs is very
limited. Wang \cite{WangY2016} analyzed a semi-discrete Galerkin scheme for a
backward semilinear stochastic parabolic equation. Since this scheme uses the
eigenvectors of the Laplace operator, its application appears to be limited. Li
and Tang \cite{Li_Tang_2021} developed a splitting-up method for solving
backward stochastic partial differential equations.
To our best knowledge, no convergence rate is available for the spatial
semi-discretization with finite element method of backward semilinear stochastic
parabolic equations with general filtration.







In this paper, we use the notion of transposition solution introduced by L\"u
and Zhang \cite{LuZhang2013,Lu2014}, and make the following threefold main
contributions.
\begin{itemize}
\item Firstly, higher regularity of the transposition solution is derived,
which is essential for deriving convergence rate of the spatial
semi-discretization. We note that \cite{Lu2014} gives the basic regularity result
\[
  (p,z) \in D_\mathbb F([0,T];L^2(\Omega;H)) \times
  L_\mathbb F^2(0,T;H).
\]
Using the regularity estimates of the deterministic backward parabolic equations,
we prove that
\[
  (p,z) \in \big(
    L_\mathbb F^2(0,T;\dot H^2) \cap
    D_\mathbb F([0,T];L^2(\Omega;\dot H^1))
  \big) \times
  L_\mathbb F^2(0,T;\dot H^1).
\]
\item Secondly, for a spatial semi-discretization of \cref{eq:model}, we derive the
error estimate
\[
  \sup_{0 \leqslant t \leqslant T}
  \ssnm{(p-p_h)(t)}_{H} +
  \ssnm{p-p_h}_{L^2(0,T;\dot H^1)} +
  \ssnm{z-z_h}_{L^2(0,T;H)} \leqslant ch,
\]
which is optimal with respect to the regularity of the transposition solution.
This spatial semi-discretization adopts the standard continuous piecewise linear
finite element method. The case that $ \mathbb F $ is the natural filtration of
$ W(\cdot) $ is also covered by our numerical analysis, since in this case the
transposition solution coincides with the usual strong solution.

\item Thirdly, with the derived higher order regularity of the transposition
  solution and the convergence estimate, the first-order accuracy is derived for
  a spatially semi-discrete stochastic linear quadratic control problem, where
  the filtration is general and the diffusion term of the state equation
  contains the control variable. Here we note that the stochastic optimal
  control problems governed by stochastic partial differential equations have
  been extensively studied in the past four decades; however, these problems
  have rarely been numerically studied. To our best knowledge, this paper
  provides the first convergence rate for a spatial semi-discretization of a
  general stochastic linear quadratic control problem with general filtration.
\end{itemize}
We believe that the obtained theoretical results in this paper are useful for
further numerical analysis of backward semilinear stochastic parabolic equations
and stochastic linear quadratic control problems.

The rest of this paper is organized as follows. \cref{sec:pre} introduces the
preliminaries. \cref{sec:transposition} investigates the higher regularity of
the transposition solution to \cref{eq:model}. In \cref{sec:discretization}, we
derive the first-order spatial accuracy for a spatial semi-discretization of
\cref{eq:model}. Finally, using the derived higher regularity result and the
convergence estimate, we establish the convergence of a spatially semi-discrete
stochastic linear quadratic control problem, and provide some numerical results  in \cref{sec:application}.

\section{Preliminaries}
\label{sec:pre} 
Assume that $ X $ is a separable Hilbert space with norm $ \nm{\cdot}_X $. We
simply write the Hilbert space $ L^2(\Omega, \mathcal F_T, \mathbb P; X) $ as $
L^2(\Omega;X) $, and denote by $ \ssnm{\cdot}_X $ its norm. For any $ v \in
L^2(\Omega;X) $, we use $ \mathbb E v $ and $ \mathbb E_t v $ to denote
respectively the expectation of $ v $ and the conditional expectation of
$ v $ with respect to $ \mathcal F_t $ for each $ 0 \leqslant t \leqslant T $.
Let $ L_\mathbb F^2(0,T;X) $ be the space of all $ \mathbb F $-progressively
measurable processes $ \varphi $ such that
\[ 
  \ssnm{\varphi}_{L^2(0,T;X)} := \Big(
    \int_0^T \ssnm{\varphi(t)}_X^2 \, \mathrm{d}t
  \Big)^{1/2} < \infty.
\]
For any $ 0 < t < T $, the space $ L_\mathbb F^2(0,t;X) $ is defined analogously
to $ L_\mathbb F^2(0,T;X) $. Let $ L_\mathbb F^2(\Omega;C([0,T];X)) $ be the
space of all $ \mathbb F $-progressively measurable processes $ \varphi $ with
continuous paths in $ X $ such that
\[ 
  \ssnm{\varphi}_{C([0,T];X)} := \Big(
    \mathbb E \sup_{t \in [0,T]}
    \nm{\varphi(t)}_X^2
  \Big)^{1/2} < \infty.
\]
Let $ D_\mathbb F([0,T]; L^2(\Omega;X)) $ be the
space of all $ X $-valued and $
\mathbb F $-adapted processes that are right continuous with left limits in $
L^2(\Omega;X) $ with respect to the time variable. This is a Banach space with
the norm
\[
  \nm{\varphi}_{D_\mathbb F([0,T];L^2(\Omega; X))}
  := \max_{t \in [0,T]} \ssnm{\varphi(t)}_X
  \quad \forall \varphi \in D_\mathbb F([0,T];L^2(\Omega;X)).
\]

Denote $ H:=L^2(\mathcal O) $. For each $ \gamma \geqslant 0 $, define
\[
  \dot H^\gamma := \{
    (-\Delta)^{-\gamma/2} v \mid  v \in H 
  \}
\]
and endow this space with the norm
\[
  \nm{v}_{\dot H^\gamma} := \nm{(-\Delta)^{\gamma/2}v}_{H 
  }
  \quad \forall v \in \dot H^\gamma.
\]
We use $ \dot H^{-\gamma} $ to denote the dual space of $ \dot H^\gamma $. The
operator $ \Delta $ can be extended as a bounded linear operator from $
H $ to $ \dot H^{-2} $ by
\[
  \dual{\Delta v, \varphi}_{\dot H^2} =
  \int_\mathcal O v \Delta \varphi
  \quad \text{for all } v \in H
  \text{ and } \varphi \in \dot H^2,
\]
where $ \dual{\cdot,\cdot}_{\dot H^2} $ denotes the duality pairing between $
\dot H^{-2} $ and $ \dot H^2 $.

For any $ g \in L_\mathbb F^2(0,T;\dot H^\gamma) $ with $ -2 \leqslant \gamma <
\infty $, let $ S_0g $ be the mild solution of the stochastic parabolic equation
\begin{equation} 
  \begin{cases}
    \label{eq:y-g}
    \mathrm{d}y(t) = \Delta y(t) \, \mathrm{d}t +
    g(t) \, \mathrm{d}W(t) \quad \forall t \in [0,T], \\
    y(0) = 0.
  \end{cases}
\end{equation}
It is standard that (see, e.g., \cite[Chapter 3]{Gawarecki2011}) 
for any $ 0 \leqslant t \leqslant T $,
\begin{equation} 
  \label{eq:S0}
  (S_0g)(t) = \int_0^t e^{(t-s)\Delta} g(s) \, \mathrm{d}W(s)
  \quad\mathbb P \text{-a.s.}
\end{equation}
Moreover, a routine argument with It\^o's formula gives
\begin{equation} 
  \label{eq:S0-stab}
  \ssnm{(S_0g)(t)}_{\dot H^\gamma}^2 +
  2\ssnm{S_0g}_{L^2(0,t;\dot H^{\gamma+1})}^2 =
  \ssnm{g}_{L^2(0,t;\dot H^\gamma)}^2
  \quad \forall 0 < t \leqslant T.
\end{equation}


Finally, we introduce the mild solutions to a forward parabolic equation and a
backward parabolic equation, respectively. For any $ g \in L^2(0,T;H) $, let $
S_1g $ and $ S_2g $ be the mild solutions of the equations
\[ 
  \begin{cases}
    y'(t) = \Delta y(t) + g(t) \quad \forall t \in [0,T], \\
    y(0) = 0
  \end{cases}
\]
and
\[
  \begin{cases}
    z'(t) = -\Delta z(t) - g(t) \quad \forall t \in [0,T], \\
    z(T) = 0,
  \end{cases}
\]
respectively. We have (see, e.g., \cite[Chapter 3]{Yagi2010})
\begin{align} 
  (S_1g)(t) = \int_0^t e^{(t-s)\Delta} g(s) \, \mathrm{d}s
  \quad \forall t \in [0,T], \\
  (S_2g)(t) = \int_t^T e^{(s-t)\Delta} g(s) \, \mathrm{d}s
  \quad \forall t \in [0,T].
\end{align}
It is standard that, for any $ v, w \in L^2(0,T;H) $,
\begin{equation}
  \label{eq:S1-S2}
  (S_1v, w)_{L^2(0,T;H)} =
  (v, S_2w)_{L^2(0,T;H)},
\end{equation}
where $ (\cdot,\cdot)_{L^2(0,T;H)} $ denotes the inner product of the Hilbert
space $ L^2(0,T;H) $.

\section{Regularity}
\label{sec:transposition}
Following \cite{Lu2014}, we call
\[
  (p,z) \in D_\mathbb F([0,T];L^2(\Omega; H))
  \times L_\mathbb F^2(0,T;H)
\]
a transposition solution to \cref{eq:model} if
\begin{equation} 
  \label{eq:p-z}
  \begin{aligned}
    & \int_t^T \big[ p(s), g(s) \big] +
    \big[ z(s),\sigma(s) \big] \, \mathrm{d}s +
    \big[ p(t), v \big]\\
    ={} &
    \quad \int_t^T \big[
      f(s,p(s),z(s)),
      (S_0\sigma + S_1g)(s) + e^{(s-t)\Delta}v
    \big] \, \mathrm{d}s  {} \\
    &+ 
    \big[
      (S_0\sigma + S_1g)(T) + e^{(T-t)\Delta}v, p_T
    \big]
  \end{aligned}
\end{equation}
for all $ 0 \leqslant t \leqslant T $, $ (g,\sigma) \in \big( L_\mathbb F^2(0,T;
H) \big)^2 $ and $ v \in L^2(\Omega, \mathcal F_t, \mathbb P; H) $, where $
[\cdot,\cdot] $ denotes the inner product in $ L^2(\Omega; H)
$. For the unique existence of the transposition solution to \cref{eq:model},
we refer the reader to \cite[Theorem 3.1]{Lu2014}. Moreover, the proof of
\cite[Theorem 3.1]{Lu2014} contains that, for any $ 0 \leqslant t \leqslant T $,
\begin{equation}
  \label{eq:trans-p-int}
  p(t) = \mathbb E_t \Big(
    \int_t^T e^{(s-t)\Delta} f(s,p(s), z(s))
    \, \mathrm{d}s + e^{(T-t)\Delta} \, p_T
  \Big) \quad \mathbb P \text{-a.s.}
\end{equation}
In particular,
\begin{equation}
  \label{eq:pT}
  p(T) = p_T \quad \mathbb P \text{-a.s.}
\end{equation}



The main result of this section is the following theorem.
\begin{theorem} 
  \label{thm:regu}
  The transposition solution $ (p,z) $ of \cref{eq:model} possesses the
  following properties:
  \begin{enumerate}[(i)]
    \item $ p \in L_\mathbb F^2(0,T;\dot H^2) $;
    \item $ p $ admits a modification in $ D_\mathbb F([0,T];L^2(\Omega;\dot
      H^1)) $;
    \item $ z \in L_\mathbb F^2(0,T;\dot H^1) $.
  \end{enumerate}
\end{theorem}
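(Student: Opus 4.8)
The plan is to base the entire argument on the representation \cref{eq:trans-p-int}. Set $F(s) := f(s,p(s),z(s))$; by the Lipschitz bound \cref{eq:f-Lips} and the basic regularity $(p,z) \in D_\mathbb{F}([0,T];L^2(\Omega;H)) \times L_\mathbb{F}^2(0,T;H)$ we have $F \in L_\mathbb{F}^2(0,T;H)$. Then \cref{eq:trans-p-int} reads $p(t) = \mathbb{E}_t\, q(t)$, where, pathwise in $\omega$, $q(t) := \int_t^T e^{(s-t)\Delta}F(s)\,\mathrm{d}s + e^{(T-t)\Delta}p_T = (S_2 F)(t) + e^{(T-t)\Delta}p_T$. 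I would prove (i) and (iii) first, and treat (ii) last as the real obstacle.

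For (i), I would use deterministic parabolic regularity. Since $F(\cdot,\omega) \in L^2(0,T;H)$, maximal regularity for the backward operator $S_2$ gives $S_2 F \in L^2(0,T;\dot H^2)$ pathwise with $\ssnm{S_2 F}_{L^2(0,T;\dot H^2)} \lesssim \ssnm{F}_{L^2(0,T;H)}$, while a direct spectral computation yields $\int_0^T \nm{e^{(T-t)\Delta}p_T}_{\dot H^2}^2\,\mathrm{d}t \le \tfrac12 \nm{p_T}_{\dot H^1}^2$, exploiting the one-derivative smoothing of the analytic semigroup integrated in time. Hence $q \in L_\mathbb{F}^2(0,T;\dot H^2)$. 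Because $(-\Delta)$ is deterministic it commutes with $\mathbb{E}_t$, and $\mathbb{E}_t$ is an $L^2(\Omega)$-contraction, so $\ssnm{p(t)}_{\dot H^2} = \ssnm{\mathbb{E}_t q(t)}_{\dot H^2} \le \ssnm{q(t)}_{\dot H^2}$; integrating in $t$ gives (i).

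For (iii) I would avoid any martingale representation (unavailable for general $\mathbb{F}$) and instead read $z$ off from the transposition identity \cref{eq:p-z} with $t=0$, $g=0$, $v=0$: for every $\sigma \in L_\mathbb{F}^2(0,T;H)$ one has $\int_0^T [z(s),\sigma(s)]\,\mathrm{d}s = \int_0^T[F(s),(S_0\sigma)(s)]\,\mathrm{d}s + [(S_0\sigma)(T),p_T]$. Applying the stability identity \cref{eq:S0-stab} with $\gamma=-1$, which bounds both $\ssnm{S_0\sigma}_{L^2(0,T;H)}$ and $\ssnm{(S_0\sigma)(T)}_{\dot H^{-1}}$ by $\ssnm{\sigma}_{L^2(0,T;\dot H^{-1})}$, together with Cauchy--Schwarz and $p_T \in \dot H^1$, I obtain $\snm{\int_0^T [z(s),\sigma(s)]\,\mathrm{d}s} \le C\,\ssnm{\sigma}_{L^2(0,T;\dot H^{-1})}$. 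Since $L_\mathbb{F}^2(0,T;H)$ is dense in $L_\mathbb{F}^2(0,T;\dot H^{-1})$, duality with pivot $H$ identifies $z$ as an element of $L_\mathbb{F}^2(0,T;\dot H^1)$, which is (iii); note this route is filtration-agnostic and uses only the stated $S_0$ estimate.

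Claim (ii) is the step I expect to be the main obstacle, because I must lift the time-regularity of a conditional-expectation process to the stronger spatial norm $\dot H^1$, where martingale theory only furnishes right-continuity with left limits rather than continuity. Expanding in the eigenbasis $\{e_k\}$ of $-\Delta$ with eigenvalues $\lambda_k$, the coefficient $p_k(t):=(p(t),e_k)_H$ satisfies $e^{-\lambda_k t}p_k(t) = \mathbb{E}_t[\Psi_k] - \int_0^t e^{-\lambda_k s}F_k(s)\,\mathrm{d}s$ with $\Psi_k := \int_0^T e^{-\lambda_k s}F_k(s)\,\mathrm{d}s + e^{-\lambda_k T}(p_T)_k$, so $p_k$ is a continuous multiple of a martingale plus a continuous process and therefore admits a scalar RCLL modification; here the normality of $\mathbb{F}$ is exactly what guarantees the RCLL martingale modification, and the terminal datum is already absorbed into $\Psi_k$. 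The finite partial sums $\sum_{k\le N}p_k\, e_k$ are then RCLL in $\dot H^1$, and a Cauchy--Schwarz estimate on the representation yields the uniform-in-$t$ tail bound $\sum_{k>N}\lambda_k\,\mathbb{E}\snm{p_k(t)}^2 \lesssim \sum_{k>N}\big(\mathbb{E}\int_0^T\snm{F_k(s)}^2\,\mathrm{d}s + \lambda_k\,\mathbb{E}\snm{(p_T)_k}^2\big)$, whose right-hand side is the tail of the convergent series controlling $\ssnm{F}_{L^2(0,T;H)}^2 + \ssnm{p_T}_{\dot H^1}^2$ and hence tends to $0$ independently of $t$. Consequently $p$ is a uniform limit in the Banach space $D_\mathbb{F}([0,T];L^2(\Omega;\dot H^1))$ of RCLL processes and so possesses the desired modification. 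The genuine difficulty is precisely organizing this so that the merely-RCLL martingale time-regularity and the spectral spatial summation are controlled simultaneously and uniformly in time.
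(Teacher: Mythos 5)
Your proposal is correct in substance, but it takes a genuinely different route from the paper's proof in all three parts, and in places is more direct. For (i), the paper never takes conditional expectations inside the $\dot H^2$-norm pointwise in $t$: it writes $p=\mathcal E_\mathbb F\eta$ (the projection onto progressively measurable processes), approximates $\eta$ by piecewise-constant-in-time processes $\eta_n$, builds $p_n(t)=\mathbb E_t\eta_n(t)$ via \cref{lem:cadlag}, and passes to the limit in $L_\mathbb F^2(0,T;\dot H^2)$; your contraction estimate $\ssnm{\mathbb E_t q(t)}_{\dot H^2}\leqslant\ssnm{q(t)}_{\dot H^2}$ shortcuts all of this, at the small cost of having to remark that $p$, already progressively measurable as an $H$-valued process and a.e.\ $\dot H^2$-valued, is then progressively measurable as a $\dot H^2$-valued process (the Borel structure of $\dot H^2$ agrees with the trace of that of $H$, so this is routine); the paper's Cauchy-sequence construction gets this measurability for free. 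For (ii), the paper obtains the modification as the uniform limit of the same $p_n$, using \cref{lem:eta-cont} for the uniform $L^2(\Omega;\dot H^1)$-continuity of $\eta$, plus a separate identification of the limit with $p$ (a.e.\ $t$ first, then all $t<T$ by right continuity, then $t=T$ via \cref{eq:pT}); your spectral route --- martingale plus absolutely continuous drift in each eigenmode, finite sums RCLL, and the uniform-in-$t$ tail bound $\sum_{k>N}\lambda_k\,\mathbb E\snm{p_k(t)}^2$ controlled by the tail of $\ssnm{F}_{L^2(0,T;H)}^2+\ssnm{p_T}_{\dot H^1}^2$ --- replaces both lemmas and identifies the limit with $p(t)$ at every single $t$ directly, which is cleaner. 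Note only that membership in $D_\mathbb F([0,T];L^2(\Omega;\dot H^1))$ requires right continuity with left limits \emph{in $L^2(\Omega)$}, not pathwise RCLL paths; that is what you actually need, and it follows from the right continuity of $\mathbb F$ and martingale convergence, so your appeal to the pathwise RCLL modification theorem is more than is required. For (iii), your duality argument (the uniform bound $\snm{\int_0^T[z,\sigma]\,\mathrm{d}s}\leqslant C\ssnm{\sigma}_{L^2(0,T;\dot H^{-1})}$ followed by density) and the paper's argument (testing \cref{eq:p-z} with $\sigma=-\Delta(z_n-z_m)$ to show the spectral truncations are Cauchy in $L_\mathbb F^2(0,T;\dot H^1)$) are two renditions of the same estimate via \cref{eq:S0-stab}; to make your duality step airtight you should either invoke Riesz representation in the Hilbert space $L_\mathbb F^2(0,T;\dot H^{-1})$, so that the representer is automatically progressively measurable, or simply test with $\sigma=-\Delta z_n$, which gives $\ssnm{z_n}_{L^2(0,T;\dot H^1)}\leqslant C$ uniformly and hence (iii) by monotone convergence --- at which point your proof and the paper's essentially coincide.
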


\begin{remark}
  When $ \mathbb F $ is the natural filtration of $ W(\cdot) $, by the theory in
  \cite{Guatteri2005,Hu_Peng_1991} we easily obtain
  \[
    (p,z) \in \big(
      L_\mathbb F^2(0,T;\dot H^2) \cap
      L_\mathbb F^2(\Omega;C([0,T];\dot H^1))
    \big) \times
    L_\mathbb F^2(0,T;\dot H^1).
  \]
\end{remark}

The rest of this section is devoted to proving the above theorem. We first
introduce three technical lemmas, i.e. \cref{lem:cadlag,lem:eta-cont,eq:bee-regu}.
\begin{lemma}
  \label{lem:cadlag}
  Assume that $ 0 \leqslant a < b \leqslant T $ and $ v \in L^2(\Omega; X) $,
  with $ X $ being a separable Hilbert space. Then there exists a unique $ y \in
  D_\mathbb F([a,b];L^2(\Omega;X)) $ such that
  \begin{equation}
    \label{eq:cadlag}
    \mathbb P(y(t) = \mathbb E_t v) = 1
    \quad \forall t \in [a,b],
  \end{equation}
  where $ D_\mathbb F([a,b];L^2(\Omega;X)) $ is defined analogously to $
  D_\mathbb F([0,T];L^2(\Omega;X)) $.
\end{lemma}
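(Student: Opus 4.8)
The plan is to exhibit the process explicitly: set $y(t) := \mathbb E_t v$ for $t \in [a,b]$ (choosing any representative of each conditional expectation) and verify that this $\mathbb F$-adapted, $L^2(\Omega;X)$-valued map is right continuous with left limits in $L^2(\Omega;X)$. Uniqueness is then immediate, since if $y_1, y_2 \in D_\mathbb F([a,b];L^2(\Omega;X))$ both satisfy \cref{eq:cadlag}, then $\ssnm{y_1(t)-y_2(t)}_X = 0$ for every $t$, whence $y_1 = y_2$ in $D_\mathbb F([a,b];L^2(\Omega;X))$. Thus the whole content lies in the regularity of $t \mapsto \mathbb E_t v$.

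The engine of the argument is the orthogonality of martingale increments. Writing $\phi(t) := \ssnm{\mathbb E_t v}_X^2$ and using the tower property together with the $\mathcal F_s$-measurability of $\mathbb E_s v$, one gets $\mathbb E\,(\mathbb E_t v, \mathbb E_s v)_X = \phi(s)$ for $a \le s \le t \le b$, so that
\[
  \ssnm{\mathbb E_t v - \mathbb E_s v}_X^2 = \phi(t) - \phi(s), \qquad a \le s \le t \le b.
\]
In particular $\phi$ is nondecreasing and bounded by $\ssnm{v}_X^2$ (the latter because $\mathbb E_t$ is an $L^2(\Omega;X)$-contraction), hence possesses one-sided limits at every point. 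Existence of left limits of $y$ then follows purely from completeness: for $s_n \uparrow t$ the identity above gives $\ssnm{\mathbb E_{s_n} v - \mathbb E_{s_m} v}_X^2 = \snm{\phi(s_n) - \phi(s_m)}$, which is Cauchy because $\phi(s_n)$ converges; since $L^2(\Omega;X)$ is complete, $\lim_{s \uparrow t} \mathbb E_s v$ exists in $L^2(\Omega;X)$, and a routine check shows the limit is independent of the approximating sequence.

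The main obstacle—and the only place the normality of $\mathbb F$ enters—is right continuity. For $s_n \downarrow t$ the same Cauchy estimate yields a limit $\eta := \lim_n \mathbb E_{s_n} v$ in $L^2(\Omega;X)$, and the point is to identify $\eta$ with $\mathbb E_t v$. Since the tail $(\mathbb E_{s_n} v)_{n \ge m}$ is $\mathcal F_{s_m}$-measurable, $\eta$ is measurable with respect to $\bigcap_m \mathcal F_{s_m} = \mathcal F_{t+} = \mathcal F_t$, where the last equality is precisely the right continuity of the filtration; hence $\mathbb E_t \eta = \eta$. On the other hand, $\mathbb E_t$ is a contraction on $L^2(\Omega;X)$ and $\mathbb E_t \mathbb E_{s_n} v = \mathbb E_t v$ by the tower property (as $t \le s_n$), so passing to the limit gives $\mathbb E_t \eta = \mathbb E_t v$. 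Combining, $\eta = \mathbb E_t v$, i.e. $\mathbb E_s v \to \mathbb E_t v$ in $L^2(\Omega;X)$ as $s \downarrow t$. This establishes that $y \in D_\mathbb F([a,b];L^2(\Omega;X))$ and completes the existence part. I expect the identification of the right limit via $\mathcal F_{t+} = \mathcal F_t$ to be the crux; everything else is the completeness and monotonicity bookkeeping described above.
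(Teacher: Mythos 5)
Your proof is correct. There is, in fact, nothing in the paper to compare it against: the authors state only that ``the proofs of \cref{lem:cadlag,lem:eta-cont} are straightforward'' and give no argument, so your write-up supplies exactly the details they omit. Your route is the standard martingale-regularity argument in the $L^2(\Omega;X)$ setting: the orthogonality identity $\ssnm{\mathbb E_t v - \mathbb E_s v}_X^2 = \phi(t)-\phi(s)$ with $\phi$ nondecreasing and bounded gives Cauchyness along monotone sequences and hence one-sided limits by completeness of $L^2(\Omega;X)$, and the right limit is identified with $\mathbb E_t v$ through $\bigcap_m \mathcal F_{s_m} = \mathcal F_{t+} = \mathcal F_t$ together with the $L^2$-continuity of $\mathbb E_t$; uniqueness is immediate from the norm of $D_\mathbb F([a,b];L^2(\Omega;X))$. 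One small refinement: right continuity of $\mathbb F$ is not quite the \emph{only} place normality enters. When you assert that $\eta = \lim_n \mathbb E_{s_n} v$ is $\mathcal F_{s_m}$-measurable, the $L^2$-limit is a priori only almost surely equal to an $\mathcal F_{s_m}$-measurable random variable (obtained via an a.s.-convergent subsequence); it is the completeness of the filtration (all $\mathbb P$-null sets lie in $\mathcal F_{s_m}$) that lets you conclude $\eta$ itself is $\mathcal F_{s_m}$-measurable and hence $\mathbb E_t \eta = \eta$. This is a one-line fix, but worth stating since you explicitly flag where the hypothesis on $\mathbb F$ is used.
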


\begin{lemma}
  \label{lem:eta-cont}
  Assume that $ w \in L^2(\Omega; C([0,T];X)) $, where $ X $ is a separable
  Hilbert space. Then
  \begin{equation}
    \label{eq:eta-cont}
    \lim_{m \to \infty} \sup_{
      \substack{
        0 \leqslant r < s \leqslant T \\
        s - r \leqslant 1/m
      }
    } \ssnm{w(r) - w(s)}_X = 0.
  \end{equation}
\end{lemma}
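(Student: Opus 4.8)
The plan is to reduce the claim to a pathwise statement and then invoke the dominated convergence theorem. I introduce the pathwise modulus of continuity
\[
  \eta_m(\omega) := \sup_{
    \substack{
      0 \leqslant r < s \leqslant T \\
      s - r \leqslant 1/m
    }
  } \nm{w(r,\omega) - w(s,\omega)}_X .
\]
Since $ w $ has continuous paths in $ X $, for $ \mathbb P $-almost every $ \omega $ the map $ t \mapsto w(t,\omega) $ is continuous on the compact interval $ [0,T] $, hence uniformly continuous; therefore $ \eta_m(\omega) \downarrow 0 $ as $ m \to \infty $ for almost every $ \omega $. This is the only place where the continuity of the paths is used.

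First I would settle measurability and integrability. By path continuity the supremum defining $ \eta_m $ is unchanged if $ (r,s) $ ranges only over rational pairs, so $ \eta_m $ is a countable supremum of measurable functions and is itself measurable. Moreover, $ \eta_m(\omega) \leqslant 2 \sup_{t \in [0,T]} \nm{w(t,\omega)}_X =: 2\Phi(\omega) $, and by hypothesis $ \ssnm{w}_{C([0,T];X)}^2 = \mathbb E\,\Phi^2 < \infty $, so $ \Phi \in L^2(\Omega) $ and $ \eta_m^2 \leqslant 4\Phi^2 $ is dominated by an integrable function uniformly in $ m $.

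Next I would compare the quantity of interest with $ \eta_m $. For every admissible pair $ (r,s) $ with $ s - r \leqslant 1/m $ one has the pointwise (in $ \omega $) bound $ \nm{w(r) - w(s)}_X \leqslant \eta_m $, whence
\[
  \ssnm{w(r) - w(s)}_X
  = \big( \mathbb E \nm{w(r)-w(s)}_X^2 \big)^{1/2}
  \leqslant \big( \mathbb E \eta_m^2 \big)^{1/2}.
\]
Taking the supremum over all such pairs gives
\[
  \sup_{
    \substack{
      0 \leqslant r < s \leqslant T \\
      s - r \leqslant 1/m
    }
  } \ssnm{w(r) - w(s)}_X
  \leqslant \big( \mathbb E \eta_m^2 \big)^{1/2}.
\]

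Finally, since $ \eta_m \to 0 $ almost surely and $ \eta_m^2 \leqslant 4\Phi^2 \in L^1(\Omega) $, the dominated convergence theorem yields $ \mathbb E \eta_m^2 \to 0 $, and the desired limit \cref{eq:eta-cont} follows at once. The argument is essentially routine; I expect no genuine obstacle, the only points requiring a little care being the measurability of $ \eta_m $ (handled by restricting to rational time pairs via path continuity) and the exhibition of an integrable dominating function, after which dominated convergence closes the proof.
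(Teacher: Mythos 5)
Your proof is correct and is exactly the ``straightforward'' argument the paper has in mind (the paper omits the proof, noting only that it is routine): pathwise uniform continuity on the compact interval $[0,T]$ gives $\eta_m \to 0$ almost surely, the domination $\eta_m \leqslant 2\sup_{t\in[0,T]}\nm{w(t)}_X \in L^2(\Omega)$ follows from the definition of the $L^2(\Omega;C([0,T];X))$ norm, and dominated convergence transfers the pathwise limit to the $L^2(\Omega;X)$-uniform statement. The measurability point (reducing the supremum to rational pairs) is handled properly, so there is nothing to add.
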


\begin{lemma} 
  \label{eq:bee-regu}
  Assume that $ v \in \dot H^1 $ and $ g \in L^2(0,T;H) $. Define
  \[
    w(t) := e^{(T-t)\Delta} v +
    \int_t^T e^{(s-t)\Delta} g(s) \, \mathrm{d}s
    \quad \forall t \in [0,T].
  \]
  Then
  \begin{equation*}
    \nm{w}_{C([0,T];\dot H^1)} +
    \nm{w}_{L^2(0,T;\dot H^2)} \leqslant
    C \big( \nm{v}_{\dot H^1} + \nm{g}_{L^2(0,T;H)} \big),
  \end{equation*}
  where $ C $ is a positive constant independent of $ v $, $ g $, $ T $ and $
  \mathcal O $.
\end{lemma}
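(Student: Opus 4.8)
The plan is to recognize that $w$ is precisely the mild solution of the deterministic backward parabolic equation
\[
  w'(t) = -\Delta w(t) - g(t), \quad t \in [0,T], \qquad w(T) = v,
\]
and to establish the asserted maximal-regularity bound by a spectral energy argument. Let $\{(\lambda_k,\phi_k)\}_{k \geqslant 1}$ be the eigenpairs of $-\Delta$ with homogeneous Dirichlet boundary condition, so that $\{\phi_k\}$ is an orthonormal basis of $H$ and $\lambda_k > 0$. Writing $v = \sum_k v_k \phi_k$, $g(s) = \sum_k g_k(s)\phi_k$ and $w(t) = \sum_k w_k(t)\phi_k$, the given formula for $w$ yields $w_k(t) = e^{-\lambda_k(T-t)}v_k + \int_t^T e^{-\lambda_k(s-t)}g_k(s)\,\mathrm ds$, which is the absolutely continuous solution of the scalar backward ODE $w_k'(t) = \lambda_k w_k(t) - g_k(t)$ with $w_k(T) = v_k$. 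At this scalar level every manipulation below is fully rigorous for $g_k \in L^2(0,T)$.

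The first step is the energy estimate. Multiplying the scalar ODE by $\lambda_k w_k$, integrating over $[t,T]$, and using $w_k(T) = v_k$, I obtain
\[
  \tfrac12 \lambda_k w_k(t)^2 + \int_t^T \lambda_k^2 w_k(s)^2 \, \mathrm ds = \tfrac12 \lambda_k v_k^2 + \int_t^T \lambda_k g_k(s) w_k(s) \, \mathrm ds.
\]
Bounding the last term by Young's inequality as $\lambda_k \snm{g_k w_k} \leqslant \tfrac12 g_k^2 + \tfrac12 \lambda_k^2 w_k^2$ and absorbing the resulting $\dot H^2$-type contribution into the left-hand side gives, after summing over $k$ (legitimate by nonnegativity, via monotone convergence applied to the partial sums),
\[
  \nm{w(t)}_{\dot H^1}^2 + \nm{w}_{L^2(t,T;\dot H^2)}^2 \leqslant \nm{v}_{\dot H^1}^2 + \nm{g}_{L^2(0,T;H)}^2 \quad \forall t \in [0,T].
\]
Taking the supremum over $t$ in the first summand and $t = 0$ in the second yields both desired bounds with an absolute constant; the essential point is that only $\lambda_k > 0$ and Young's inequality are used, so the constant depends neither on $T$ nor on $\mathcal O$.

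It remains to upgrade the uniform $\dot H^1$-bound to membership in $C([0,T];\dot H^1)$. For this I introduce the truncations $w^{(N)}(t) := \sum_{k \leqslant N} w_k(t)\phi_k$, each of which is continuous from $[0,T]$ into $\dot H^1$, being a finite combination of continuous scalar functions. Applying the energy estimate to the tail data $\sum_{k > N} v_k \phi_k$ and $\sum_{k > N} g_k \phi_k$ bounds $\sup_{t \in [0,T]} \nm{w(t) - w^{(N)}(t)}_{\dot H^1}^2$ by $\sum_{k > N}\lambda_k v_k^2 + \sum_{k>N}\nm{g_k}_{L^2(0,T)}^2$, which tends to $0$ as $N \to \infty$ since these are tails of the convergent series defining $\nm{v}_{\dot H^1}^2$ and $\nm{g}_{L^2(0,T;H)}^2$. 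Hence $w^{(N)} \to w$ uniformly in $\dot H^1$, so $w \in C([0,T];\dot H^1)$ as a uniform limit of continuous functions. The main obstacle is the rigorous justification of the energy identity, since testing the equation with $-\Delta w$ formally presupposes the $\dot H^2$-regularity one is trying to prove; the spectral reduction circumvents this entirely by moving all differential manipulations to the scalar ODEs, where absolute continuity of $w_k$ makes them legitimate, and by deferring the only limiting step, the summation over $k$, to a point where every summand is nonnegative.
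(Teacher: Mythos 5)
Your proof is correct, but it takes a genuinely different route from the paper: the paper does not prove this lemma at all, dismissing it as standard and citing \cite[Theorem 10.11]{Brezis2010}, i.e.\ the maximal-regularity theorem for the forward heat equation with $H_0^1$ initial data and $L^2$ forcing (the backward statement follows by the time reversal $t \mapsto T-t$). Your spectral argument is a self-contained replacement for that citation: diagonalizing by the Dirichlet eigenfunctions reduces everything to scalar ODEs, where the energy identity --- formally, testing the equation with $-\Delta w$ --- becomes rigorous because each $w_k$ is absolutely continuous (it is $e^{\lambda_k t}$ times an indefinite integral of an $L^1$ function, a point worth one explicit line), and the only limiting operation, summation over $k$, is deferred until all terms are nonnegative. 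The main thing your approach buys is exactly what the lemma asserts and what a bare citation does not immediately deliver: the constant's independence of $T$ and $\mathcal O$. Your estimate uses only $\lambda_k > 0$ and Young's inequality, so the constant is absolute (one may take $C = 2$), whereas extracting this uniformity from a textbook theorem stated for a fixed domain and a fixed horizon would require inspecting its proof. Your truncation argument for continuity into $\dot H^1$ (uniform limits of the continuous finite spectral sums, with the tail controlled by applying the same energy estimate to the tail data) is likewise complete and substitutes for the abstract $C([0,T];H_0^1)$ assertion of the reference. What the paper's citation buys, of course, is brevity.
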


The proofs of \cref{lem:cadlag,lem:eta-cont} are straightforward, and
\cref{eq:bee-regu} is standard; see, e.g., \cite[Theorem 10.11]
{Brezis2010}.

Based on the three lemmas above, we are in a position to prove \cref{thm:regu} as follows.

\medskip\noindent{\bf Proof of \cref{thm:regu}}. Let us first prove $ (i)
  $. Inserting $ t=0 $, $ \sigma = 0 $ and $ v = 0 $ into \cref{eq:p-z}, by
  \cref{eq:S1-S2} we obtain
  \begin{align*}
    \int_0^T [p(s), g(s)] \, \mathrm{d}s =
    \int_0^T [\eta(s), g(s)] \, \mathrm{d}s
    \quad \forall g \in L_\mathbb F^2(0,T;H),
  \end{align*}
  where
  \[
    \eta(s) := \int_s^T e^{(r-s)\Delta} f(r,p(r),z(r)) \, \mathrm{d}r +
    e^{(T-s)\Delta} p_T \quad \forall s \in [0,T].
  \]
  It follows that
  \begin{equation}
    \label{eq:p-eta}
    p = \mathcal E_\mathbb F \eta
    \quad\text{ in } L_\mathbb F^2(0,T;H),
  \end{equation}
  where $ \mathcal E_\mathbb F $ is the $ L^2(\Omega;L^2(0,T;H))
  $-orthogonal projection onto $ L_\mathbb F^2(0,T;H) $. For each $ n > 0
  $, define
  \begin{equation}
    \label{eq:etan}
    \eta_n(t) := \begin{cases}
      p_T & \text{ if } t = T, \\
      \frac{n}T \int_{\frac{jT}n}^{\frac{(j+1)T}n} \eta(t) \, \mathrm{d}t
      & \text{ if } t \in \big[ \frac{jT}n, \frac{(j+1)T}n \big)
      \text{ with } 0 \leqslant j < n.
    \end{cases}
  \end{equation}
  By \cref{eq:bee-regu} we have
  \begin{equation}
    \label{eq:eta-regu}
    \eta \in L^2(\Omega;L^2(0,T;\dot H^2)) \cap
    L^2(\Omega;C([0,T];\dot H^1)),
  \end{equation}
  and a routine density argument yields
  \begin{equation}
    \label{eq:426}
    \lim_{n \to \infty} \eta_n =
    \eta \quad\text{ in }
    L^2(\Omega;L^2(0,T;\dot H^2)).
  \end{equation}
  By \cref{lem:cadlag} we conclude that there exists a unique $ p_n \in
  D_\mathbb F([0,
  T]; L^2(\Omega;\dot H^2)) $ satisfying that
  \begin{equation}
    \label{eq:pn-etan}
    \mathbb P\big( p_n(t) = \mathbb E_t\eta_n(t) \big) = 1
    \quad \forall t \in [0,T].
  \end{equation}
  Hence, by the inequality
  \[
    \ssnm{(p_m - p_n)(t)}_{\dot H^2} =
    \ssnm{\mathbb E_t(\eta_m-\eta_n)(t)}_{\dot H^2}
    \leqslant \ssnm{(\eta_m - \eta_n)(t)}_{\dot H^2}
  \]
 for any $ m,n > 0 $ and $ t \in [0,T] $, we obtain
  \[
    \lim_{\substack{m \to \infty \\ n \to \infty}}
    \ssnm{p_m - p_n}_{L^2(0,T;\dot H^2)}^2 \leqslant
    \lim_{\substack{m \to \infty \\ n \to \infty}}
    \ssnm{\eta_m - \eta_n}_{L^2(0,T;\dot H^2)}^2
    \, \mathrm{d}t = 0 \quad \text{(by \cref{eq:426})}.
  \]
  It follows that $ \{p_n\}_{n=1}^\infty $ is a Cauchy sequence in $ L_\mathbb
  F^2(0,T;\dot H^2) $, and so there exists a unique $ \widetilde p \in L_\mathbb
  F^2(0, T;\dot H^2) $ such that
  \begin{equation}
    \label{eq:pn2wtp}
    \lim_{n \to \infty} p_n = \widetilde p
    \quad \text{ in } L_\mathbb F^2(0,T;\dot H^2).
  \end{equation}
  By \cref{eq:pn-etan}, it is easy to verify that, for each $ n > 0 $,
  \[
    \mathcal E_\mathbb F \eta_n = p_n
    \quad \text{ in } L_\mathbb F^2(0,T;H),
  \]
  so that by \cref{eq:p-eta,eq:426} we get
  \begin{align}
    \label{eq:427}
    \lim_{n \to \infty} p_n = \lim_{n \to \infty}
    \mathcal E_\mathbb F \eta_n =
    \mathcal E_\mathbb F \eta = p \quad
    \text{ in } L_\mathbb F^2(0,T;H).
  \end{align}
  In view of \cref{eq:pn2wtp,eq:427}, we readily obtain $ p \in L_\mathbb
  F^2(0,T;\dot H^2) $.

  Secondly, let us prove $ (ii) $. For any $ 0 < m < n < \infty $, we have
  \begin{align*}
    & \nm{p_m - p_n}_{
      D_\mathbb F([0,T];L^2(\Omega;\dot H^1))
    } \\
    ={} &
    \max_{0 \leqslant t \leqslant T}
    \ssnm{\mathbb E_t(\eta_m(t) - \eta_n(t))}_{\dot H^1}
    \quad \text{(by \cref{eq:pn-etan})} \\
    \leqslant{} &
    \max_{0 \leqslant t \leqslant T}
    \ssnm{\eta_m(t) - \eta_n(t)}_{\dot H^1} \\
    \leqslant{} &
    \max_{
      \substack{0 \leqslant r < s \leqslant T \\ s-r \leqslant 2T/m}
    } \ssnm{\eta(r) - \eta(s))}_{\dot H^1}
    \quad\text{(by \cref{eq:etan}),}
  \end{align*}
  so that \cref{lem:eta-cont} implies
  \[
    \lim_{\substack{m \to \infty \\ n \to \infty}}
    \nm{p_m - p_n}_{D_\mathbb F([0,T];L^2(\Omega;\dot H^1))} = 0.
  \]
  It follows that $ \{p_n\}_{n=1}^\infty $ is a Cauchy sequence in $ D_\mathbb
  F([0, T]; L^2(\Omega;\dot H^1)) $. Hence, there exists a unique $ \bar p \in
  D_\mathbb F([0,T];L^2(\Omega;\dot H^1)) $ such that
  \begin{equation}
    \label{eq:428}
    \lim_{n \to \infty} p_n = \bar p
    \quad\text{ in } D_\mathbb F([0,T];L^2(\Omega;\dot H^1)),
  \end{equation}
  which, together with \cref{eq:427}, yields
  \[
    p = \bar p \quad \text{ in }
    L_\mathbb F^2(0,T;H).
  \]
  It follows that
  \[
    p(t) = \bar p(t) \quad \text{ in }
    L^2(\Omega, \mathcal F_t, \mathbb P; H)
    \quad \text{a.e.}~t \in [0,T].
  \]
  Since $ p \in D_\mathbb F([0,T];L^2(\Omega;H)) $ and $ \bar p \in
  D_\mathbb F([0,T];L^2(\Omega;\dot H^1)) $, we then obtain
  \begin{equation}
    \label{eq:p-barp<T}
    p(t) = \bar p(t) \quad \text{ in }
    L^2(\Omega, \mathcal F_t, \mathbb P;H)
    \quad \forall t \in [0,T).
  \end{equation}
  Since \cref{eq:etan,eq:pn-etan} imply $ p_n(T) = p_T $, $ \mathbb P $-a.s., by
  \cref{eq:428} we get $ \bar p(T) = p_T $, $ \mathbb P $-a.s., and so
  \cref{eq:pT} implies
  \[ 
    p(T) = \bar p(T) \quad \text{ in }
    L^2(\Omega, \mathcal F_T, \mathbb P; H).
  \]
  By virtue of this equality and \cref{eq:p-barp<T}, we conclude that
  $ \bar p $ is exactly the modification of $ p $ in $ D_\mathbb F([0,T];
  L^2(\Omega;\dot H^1)) $.

  Thirdly, let us prove $ (iii) $. It is standard that there exists an
  orthonormal basis $ \{\phi_k\}_{k=0}^\infty \subset \dot H^2 $ of $ H $
  satisfying that
  \[
    -\Delta \phi_k = \lambda_k \phi_k,
  \]
  where $ \{\lambda_k\}_{k=0}^\infty $ is a nondecreasing sequence of strictly
  positive numbers with limit $ +\infty $. For each $ n \in \mathbb N $, define
  \begin{align*}
    z_n(t) &:= \sum_{k=0}^n (z(t),\phi_k)_{H} \phi_k,
    \quad 0 \leqslant t \leqslant T, \\
    F_n(t) &:= \sum_{k=0}^n (f(t,p(t),z(t)),\phi_k)_{H} \phi_k,
    \quad 0 \leqslant t \leqslant T,
  \end{align*}
  where $ (\cdot,\cdot)_{H} $ is the inner product of $ H $. For
  any $ 0 < m < n < \infty $, define $ \delta_{m,n} := z_n - z_m $.  Inserting $
  t = 0 $, $ g = 0 $, $ \sigma = -\Delta \delta_{m,n} $ and $ v =
  0 $ into \cref{eq:p-z} yields
  \begin{small}
  \begin{align*} 
    & \int_0^T \big[
      z, -\Delta \delta_{m,n}
    \big] \, \mathrm{d}s \\
    ={} &
    \int_0^T \big[
      f(s,p(s),z(s)), -(S_0\Delta\delta_{m,n})(s)
    \big] \, \mathrm{d}s +
    \big[ -(S_0\Delta\delta_{m,n})(T), p_T \big] \\
    ={} &
    \int_0^T \big[
      (F_n \!-\! F_m)(s), -(S_0\Delta\delta_{m,n})(s)
    \big] \, \mathrm{d}s +
    \Big[
      -(S_0\Delta \delta_{m,n})(T),
      \sum_{k=m+1}^n (p_T,\phi_k)_{H} \phi_k
    \Big] \\
    \leqslant{} &
    \ssnm{F_n-F_m}_{L^2(0,T;H)}
    \ssnm{
      S_0\Delta \delta_{m,n}
    }_{L^2(0,T;H)} + {} \\
    & \qquad \ssnm{
      (S_0\Delta \delta_{m,n})(T)
    }_{\dot H^{-1}}
    \Big(
      \sum_{k=m+1}^n \lambda_k \ssnm{(p_T, \phi_k)_{H}}_\mathbb R^2
    \Big)^{1/2} \\
    \leqslant{} &
    \ssnm{\delta_{m,n}}_{L^2(0,T;\dot H^{1})} \Big(
      \ssnm{F_n - F_m}_{L^2(0,T;H)} + \Big(
        \sum_{k={m+1}}^n \lambda_k \ssnm{(p_T,\phi_k)}_{\mathbb R}^2
      \Big)^{1/2}
    \Big),
  \end{align*}
  \end{small}
  where we have used   \cref{eq:S0-stab} and the equality
  \[
    \ssnm{\Delta \delta_{m,n}}_{L^2(0,T;\dot H^{-1})} =
    \ssnm{\delta_{m,n}}_{L^2(0,T;\dot H^1)}.
  \]
  Hence, by the equality
  \[
    \int_0^T \big[
      z(t), -\Delta \delta_{m,n}(t)
    \big] \, \mathrm{d}t =
    \ssnm{\delta_{m,n}}_{L^2(0,T;\dot H^1)}^2,
  \]
  we get
  \[
    \ssnm{\delta_{m,n}}_{L^2(0,T;\dot H^1)} \leqslant
    \ssnm{F_n - F_m}_{L^2(0,T;H)} + \Big(
      \sum_{k=m+1}^n \lambda_k \ssnm{(p_T,\phi_k)_{H}}_{\mathbb R}^2
    \Big)^{1/2}.
  \]
  Since
  \begin{align*}
    \lim_{\substack{m \to \infty \\ n \to \infty}}
    \ssnm{F_n - F_m}_{L^2(0,T;H)} + \Big(
      \sum_{k=m+1}^n \lambda_k \ssnm{(p_T,\phi_k)_{H}}_{\mathbb R}^2
    \Big)^{1/2} = 0,
  \end{align*}
  we obtain
  \[
    \lim_{\substack{m \to \infty \\ n \to \infty}}
    \ssnm{\delta_{m,n}}_{L^2(0,T;\dot H^1)} = 0.
  \]
  This implies that $ \{z_n\}_{n=0}^\infty $ is a Cauchy sequence in $ L_\mathbb
  F^2(0,T;\dot H^1) $. Hence, there exists a unique $ \widetilde z \in L_\mathbb
  F^2(0, T;\dot H^1) $ such that
  \[
    \lim_{n \to \infty} z_n = \widetilde z \quad \text{ in }
    L_\mathbb F^2(0,T;\dot H^1).
  \]
  By definition, we also have
  \[
    \lim_{n \to \infty} z_n = z \quad \text{ in }
    L_\mathbb F^2(0,T;H).
  \]
  Consequently, we obtain $ z \in L_\mathbb F^2(0,T;\dot H^1) $ and thus
  conclude the proof.
\hfill\ensuremath{
  \vbox{\hrule height0.6pt\hbox{%
    \vrule height1.3ex width0.6pt\hskip0.8ex
    \vrule width0.6pt}\hrule height0.6pt
  }
}

\section{Spatial semi-discretization}
\label{sec:discretization}
Let $ \mathcal K_h $ be a conventional conforming, shape regular and
quasi-uniform triangulation of $ \mathcal O $ consisting of $ d $-simplexes, and
let $ h $ denote the maximum diameter of the elements in $ \mathcal K_h $.
Define
\begin{align*}
  \mathcal V_h &:= \left\{
    v_h \in C(\overline{\mathcal O}) \mid\,
    v_h \text{ is linear on each } K \in \mathcal K_h
    \, \text{ and } v_h = 0 \text{ on }
    \partial\mathcal O
  \right\}.
\end{align*}
Let $ Q_h $ be the $ L^2(\mathcal O) $-orthogonal projection onto $ \mathcal V_h
$, and define the discrete Laplace operator $ \Delta_h: \mathcal V_h \to
\mathcal V_h $ by
\[
  \int_{\mathcal O} (\Delta_h v_h) w_h \, \mathrm{d}x =
  - \int_{\mathcal O} \nabla v_h \cdot \nabla w_h
  \, \mathrm{d}x \quad \text{ for all }
  v_h, w_h \in \mathcal V_h.
\]
For each $ \gamma \in \mathbb R $, let $ \dot H_h^\gamma $ be the space $
\mathcal V_h $ endowed with the norm
\[
  \nm{v_h}_{\dot H_h^\gamma} :=
  \nm{(-\Delta_h)^{\gamma/2} v_h}_{H}
  \quad \forall v_h \in \mathcal V_h.
\]
For any $ g \in L_\mathbb F^2(0,T;H) $, let $ S_0^hg $ and $ S_1^hg $ be the
mild solutions of the equations
\begin{equation}
  \label{eq:S0h}
  \begin{cases}
    \mathrm{d}y_h(t) = \Delta_h y_h(t) \, \mathrm{d}t +
    Q_hg(t) \, \mathrm{d} W(t)
    \quad \forall 0 \leqslant t \leqslant T, \\
    y_h(0) = 0
  \end{cases}
\end{equation}
and
\begin{equation}
  \begin{cases}
    \mathrm{d}y_h(t) = (\Delta_h y_h + Q_hg)(t) \, \mathrm{d}t
    \quad \forall 0 \leqslant t \leqslant T, \\
    y_h(0) = 0,
  \end{cases}
\end{equation}
respectively. It is standard that, for any $ 0 \leqslant t \leqslant T $,
\begin{align}
  (S_0^hg)(t) &= \int_0^t e^{(t-s)\Delta_h} Q_hg(s) \, \mathrm{d}W(s)
  \quad\mathbb P \text{-a.s.},
  \label{eq:S0h-int} \\
  (S_1^hg)(t) &= \int_0^t e^{(t-s)\Delta_h} Q_h g(s) \, \mathrm{d}s
  \quad\mathbb P \text{-a.s.}
  \label{eq:S1h-int}
\end{align}
In the rest of this paper, $ c $ denotes a generic positive constant independent
of $ h $, and its value may differ in different places.

We consider the following spatial semi-discretization of equation
\cref{eq:model}:
\begin{equation}
  \label{eq:semi}
  \begin{cases}
    \mathrm{d}p_h(t) = -\big(
      \Delta_h p_h(t) + Q_hf(t,p_h(t),z_h(t))
    \big) \, \mathrm{d}t +
    z_h(t) \, \mathrm{d}W(t), \quad 0 \leqslant t \leqslant T, \\
    p_h(T) = Q_hp_T.
  \end{cases}
\end{equation}
Similarly to \cref{eq:model}, this equation has a unique transposition
solution
\[
  (p_h,z_h) \in D_\mathbb F([0,T];L^2(\Omega;\mathcal V_h))
  \times L_\mathbb F^2(0,T;\mathcal V_h),
\]
which is defined by
\begin{equation} 
  \label{eq:ph-zh}
  \begin{aligned}
    & \int_t^T \big[ p_h(s), g_h(s) \big] +
    \big[ z_h(s),\sigma_h(s) \big] \, \mathrm{d}s +
    \big[ p_h(t), v_h \big]\\
    ={} &
   \quad  \int_t^T \big[
      f(s,p_h(s),z_h(s)),
      (S_0^h\sigma_h + S_1^hg_h)(s) + e^{(s-t)\Delta_h}v_h
    \big] \, \mathrm{d}s {} \\
    &
    + \big[
      (S_0^h\sigma_h + S_1^hg_h)(T) + e^{(T-t)\Delta_h}v_h, p_T
    \big]
  \end{aligned}
\end{equation}
for all $ 0 \leqslant t \leqslant T $, $ (g_h,\sigma_h) \in \big( L_\mathbb
F^2(0,T; \mathcal V_h) \big)^2 $ and $ v_h \in L^2(\Omega, \mathcal F_t, \mathbb
P; \mathcal V_h) $. Similarly to \cref{eq:trans-p-int}, we have, for any $ 0
\leqslant t \leqslant T $,
\begin{equation}
  \label{eq:trans-ph-int}
  p_h(t) = \mathbb E_t \Big(
    \int_t^T e^{(s-t)\Delta_h}
    Q_h f(s,p_h(s), z_h(s))
    \, \mathrm{d}s + e^{(T-t)\Delta_h} Q_h p_T
  \Big) \quad \mathbb P \text{-a.s.}
\end{equation}

The main result of this section is the following error estimate.
\begin{theorem}
  \label{thm:conv}
  Let $ (p,z) $ and $ (p_h,z_h) $ be the transposition solutions of
  \cref{eq:model,eq:semi}, respectively. Then
  \begin{equation}
    \label{eq:conv}
    \sup_{0 \leqslant t \leqslant T}
    \ssnm{(p-p_h)(t)}_{H} +
    \ssnm{p-p_h}_{L^2(0,T;\dot H^1)} +
    \ssnm{z-z_h}_{L^2(0,T;H)} \leqslant ch.
  \end{equation}
\end{theorem}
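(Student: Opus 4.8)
The plan is to carry the whole estimate inside the finite element space $\mathcal V_h$, so that the analysis reduces to a genuine finite-dimensional backward stochastic differential equation to which It\^o's formula applies directly. Introduce the elliptic (Ritz) projection $R_h:\dot H^1 \to \mathcal V_h$, defined by $(\nabla R_h v, \nabla w_h) = (\nabla v, \nabla w_h)$ for all $w_h \in \mathcal V_h$, and recall the commuting identity $\Delta_h R_h v = Q_h \Delta v$ for $v \in \dot H^2$, the coincidence $\nm{v_h}_{\dot H^1} = \nm{v_h}_{\dot H_h^1}$ on $\mathcal V_h$, and the standard projection bounds $\nm{v-Q_hv}_H + \nm{v-R_hv}_H \leqslant ch\nm{v}_{\dot H^1}$ and $\nm{v-Q_hv}_{\dot H^1} + \nm{v-R_hv}_{\dot H^1}\leqslant ch\nm{v}_{\dot H^2}$, the latter relying on the quasi-uniformity of $\mathcal K_h$. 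I split $p - p_h = (p-Q_hp) + \theta$ and $z - z_h = (z-Q_hz) + (Q_hz-z_h)$, where $\theta := Q_hp - p_h \in \mathcal V_h$. By \cref{thm:regu} (namely $p \in L_\mathbb F^2(0,T;\dot H^2)$ with a modification in $D_\mathbb F([0,T];L^2(\Omega;\dot H^1))$ and $z \in L_\mathbb F^2(0,T;\dot H^1)$) the projection pieces are $O(h)$, which is exactly what makes the rate in \cref{eq:conv} optimal, so it remains to control $\theta$ and $Q_hz - z_h$.

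First I would show that, under the regularity of \cref{thm:regu}, the transposition solution satisfies \cref{eq:model} in the strong It\^o sense after testing against $\mathcal V_h$. Testing against $w_h \in \mathcal V_h$ and using $(\Delta p, w_h) = (\Delta_h R_h p, w_h)$ yields the finite-dimensional backward equation
\[
  \mathrm{d}(Q_h p) = -\big(\Delta_h R_h p + Q_h f(\cdot,p,z)\big)\,\mathrm{d}t + Q_h z \, \mathrm{d}W, \quad (Q_hp)(T) = Q_hp_T.
\]
Subtracting \cref{eq:semi} and writing $\Delta_h R_h p - \Delta_h p_h = \Delta_h\theta + \Delta_h(R_hp - Q_hp)$, the process $\theta$ solves a backward SDE in $\mathcal V_h$ with diffusion term $Q_hz - z_h$ and, crucially, \emph{vanishing} terminal value $\theta(T) = Q_hp_T - Q_hp_T = 0$, using $p(T) = p_T$ from \cref{eq:pT}.

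The core step is It\^o's formula for $\nm{\theta(t)}_H^2$ on $[t,T]$. Since the backward diffusion enters the energy balance with a favourable sign, taking expectations and using $\theta(T)=0$ gives the identity
\begin{multline*}
  \ssnm{\theta(t)}_H^2 + 2\ssnm{\theta}_{L^2(t,T;\dot H^1)}^2 + \ssnm{Q_hz-z_h}_{L^2(t,T;H)}^2 \\
  = \mathbb E\int_t^T \big[ 2(\theta,Q_h(f(\cdot,p,z)-f(\cdot,p_h,z_h))) - 2(\nabla\theta,\nabla(R_hp-Q_hp)) \big] \, \mathrm{d}s.
\end{multline*}
I would then bound the right-hand side by Young's inequality: the last term is controlled by $\tfrac12\ssnm{\theta}_{L^2(t,T;\dot H^1)}^2 + c\ssnm{R_hp-Q_hp}_{L^2(t,T;\dot H^1)}^2 = \tfrac12\ssnm{\theta}_{L^2(t,T;\dot H^1)}^2 + O(h^2)$, the projection piece being $O(h^2)$ because $p \in L_\mathbb F^2(0,T;\dot H^2)$; and the Lipschitz term \cref{eq:f-Lips}, after inserting $\nm{p-p_h}_H \leqslant \nm{p-Q_hp}_H + \nm{\theta}_H$ and $\nm{z-z_h}_H \leqslant \nm{z-Q_hz}_H + \nm{Q_hz-z_h}_H$, is dominated by $c\,\mathbb E\int_t^T\nm{\theta}_H^2\,\mathrm{d}s + \tfrac12\ssnm{Q_hz-z_h}_{L^2(t,T;H)}^2 + O(h^2)$. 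Absorbing the two half-terms into the left-hand side and applying the backward Gronwall inequality gives $\sup_{t}\ssnm{\theta(t)}_H^2 + \ssnm{\theta}_{L^2(0,T;\dot H^1)}^2 + \ssnm{Q_hz-z_h}_{L^2(0,T;H)}^2 \leqslant ch^2$; the triangle inequality together with the projection bounds then delivers \cref{eq:conv}.

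The main obstacle I anticipate is the first step: rigorously passing from the transposition identity \cref{eq:p-z} to the strong, $\mathcal V_h$-tested It\^o form used above. This is precisely where the higher regularity of \cref{thm:regu} is indispensable, and I expect the justification to mirror the test-function technique used to prove part (iii) of \cref{thm:regu} (choosing $\sigma$ built from $-\Delta_h$ applied to the discrete error and invoking the isometry \cref{eq:S0-stab}). A secondary point is ensuring that the stochastic integral in the It\^o step is a true martingale, which follows from the $D_\mathbb F$-regularity of $\theta$ by a routine localization argument. By contrast, the $z$-error, usually the delicate quantity in BSDE discretizations, is here produced automatically by the favourable sign of the diffusion term in the backward energy balance.
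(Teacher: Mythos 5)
Your energy argument is the classical Ritz-projection proof and it would be correct if the filtration $\mathbb F$ were the natural filtration of $W$ --- but in that case the theorem is essentially known; the whole point of this paper is the \emph{general} filtration, and that is exactly where your first step fails. The identity
\begin{equation*}
  \mathrm{d}(Q_h p) = -\big(\Delta_h R_h p + Q_h f(\cdot,p,z)\big)\,\mathrm{d}t + Q_h z \,\mathrm{d}W
\end{equation*}
cannot hold in general: it asserts that the martingale part of $Q_hp$ is a stochastic integral against $W$, which requires the martingale representation theorem, precisely the tool that is unavailable for a general filtration and whose absence motivated the transposition-solution framework of L\"u and Zhang. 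A concrete counterexample: take $f=0$ and let $\mathcal F_T$ contain an independent Brownian motion $B$ with $p_T$ depending nontrivially on $B$; then $p(t)=\mathbb E_t\,e^{(T-t)\Delta}p_T$ has a martingale component orthogonal to $W$, so no equation driven solely by $\mathrm{d}W$ can be satisfied. The correct strong form is $\mathrm{d}p = -(\Delta p + f)\,\mathrm{d}t + z\,\mathrm{d}W + \mathrm{d}M$ with $M$ a c\`adl\`ag martingale orthogonal to $W$ (and similarly for $p_h$ with some $M_h$). Establishing this decomposition from \cref{eq:p-z}, and identifying the integrand against $W$ with the transposition $z$, is a substantive result in itself; it does not follow by ``mirroring'' the proof of part (iii) of \cref{thm:regu}, which produces norm bounds on $z$ via duality test functions, not a semimartingale decomposition of $p$. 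Your own flagged ``main obstacle'' is thus a genuine gap, and the proposed repair does not close it. (If one did have the decomposition, your energy identity would survive with an extra nonnegative term $\mathbb E\,[\langle Q_hM-M_h\rangle]_t^T$ on the left, and It\^o's formula would have to be applied in its c\`adl\`ag form, since orthogonal martingales of a general filtration may jump.)

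By contrast, the paper never invokes It\^o's formula on the error at all. It stays entirely inside the transposition framework: the $z$-error is obtained in Step 1 by inserting the tailor-made test function $\sigma_h = z_h - Q_h z$ (extended by zero) into both \cref{eq:p-z} and \cref{eq:ph-zh} and invoking \cref{lem:S0-S0h}; the $p$-error is obtained from the conditional-expectation representations \cref{eq:trans-p-int,eq:trans-ph-int} together with the deterministic Thom\'ee-type semigroup estimates of \cref{lem:1002,lem:1000,lem:1001}; Gronwall is then applied on a short terminal interval and the argument is bootstrapped over finitely many subintervals. This duality route is what allows the result to hold for an arbitrary normal filtration, at the price of the more roundabout two-field fixed-point structure that your (inadmissible) single energy identity would have collapsed.
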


\subsection{Some auxiliary estimates}


\begin{lemma}
  \label{lem:1002}
  For any $ v \in \dot H^1 $, we have
  \begin{align}
    \max_{0 \leqslant t \leqslant T}
    \nm{
      (e^{t\Delta} - e^{t\Delta_h}Q_h)v
    }_{ H } &
    \leqslant ch \nm{v}_{\dot H^1},
    \label{eq:etDelta-etDeltah} \\
    \Big(
      \int_0^T \nm{
        (e^{t\Delta} - e^{t\Delta_h} Q_h)v
      }_{\dot H^1}^2
      \, \mathrm{d}t
    \Big)^{1/2} & \leqslant ch \nm{v}_{\dot H^1}.
    \label{eq:foo-3}
  \end{align}
\end{lemma}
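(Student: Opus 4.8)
The plan is to reduce both estimates to a single energy estimate for the finite element error of the homogeneous heat equation. Writing $u(t) := e^{t\Delta}v$ and $u_h(t) := e^{t\Delta_h}Q_h v$, the quantity to be controlled is exactly $u - u_h = (e^{t\Delta} - e^{t\Delta_h}Q_h)v$. I would introduce the Ritz projection $R_h : \dot H^1 \to \mathcal V_h$ defined by $(\nabla(R_h w - w), \nabla\chi) = 0$ for all $\chi \in \mathcal V_h$, split the error as $u - u_h = \rho + \theta$ with $\rho := u - R_h u$ and $\theta := R_h u - u_h \in \mathcal V_h$, and recall the standard Ritz bounds $\nm{\rho(t)}_H \leq ch\nm{u(t)}_{\dot H^1}$ and $\nm{\rho(t)}_{\dot H^1} \leq ch\nm{u(t)}_{\dot H^2}$, together with the $\dot H^1$-stability $\nm{\nabla Q_h\chi}_H \leq c\nm{\nabla\chi}_H$ of the $L^2$-projection on quasi-uniform meshes. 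Since $\nm{u(t)}_{\dot H^1} \leq \nm{v}_{\dot H^1}$, the $\rho$-part already gives $\sup_t \nm{\rho(t)}_H \leq ch\nm{v}_{\dot H^1}$; and using the parabolic smoothing identity $\int_0^T \nm{u(t)}_{\dot H^2}^2 \, \mathrm{d}t = \int_0^T \nm{(-\Delta)e^{t\Delta}v}_H^2 \, \mathrm{d}t \leq \tfrac12 \nm{v}_{\dot H^1}^2$ it also gives $\int_0^T \nm{\rho(t)}_{\dot H^1}^2 \, \mathrm{d}t \leq ch^2 \nm{v}_{\dot H^1}^2$.

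For $\theta$ I would use Galerkin orthogonality, which yields $(\theta_t, \chi) + (\nabla\theta, \nabla\chi) = -(\rho_t, \chi)$ for every $\chi \in \mathcal V_h$, and the choice $\chi = \theta$, giving the energy identity $\tfrac12 \frac{\mathrm{d}}{\mathrm{d}t} \nm{\theta}_H^2 + \nm{\nabla\theta}_H^2 = -(\rho_t, \theta)$. The only dangerous term is $(\rho_t, \theta)$: here $\rho_t = (I - R_h)\Delta u$ involves $\Delta u$, which is too rough for $\dot H^1$ data (indeed $\int_0^T \nm{u(t)}_{\dot H^3}^2 \, \mathrm{d}t$ diverges), so bounding $\rho_t$ directly would force extra regularity and would leave a spurious logarithmic factor in $h$. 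I would avoid this by integrating $(\rho_t, \theta)$ by parts in time and exploiting the $L^2$-orthogonality $\rho - Q_h\rho \perp \mathcal V_h$ together with $\theta_t \in \mathcal V_h$; this converts $-\int_0^t (\rho_s, \theta) \, \mathrm{d}s$ into a combination of $\int_0^t (\nabla\theta, \nabla Q_h\rho) \, \mathrm{d}s$, the telescoping quantity $\nm{Q_h\rho}_H^2$, and time-boundary terms, all of which are controllable since $\int_0^T \nm{\nabla Q_h\rho}_H^2 \, \mathrm{d}t \leq c\int_0^T \nm{\rho(t)}_{\dot H^1}^2 \, \mathrm{d}t \leq ch^2 \nm{v}_{\dot H^1}^2$ has already been bounded.

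Applying Young's inequality to absorb the $\nm{\nabla\theta}_H^2$ and $\nm{\theta(t)}_H^2$ contributions into the left-hand side, and estimating the remaining time-boundary terms by $\nm{\theta(0)}_H = \nm{(R_h - Q_h)v}_H \leq ch\nm{v}_{\dot H^1}$ and $\nm{\rho(t)}_H \leq ch\nm{v}_{\dot H^1}$, I expect to obtain $\sup_{0 \leqslant t \leqslant T} \nm{\theta(t)}_H^2 + \int_0^T \nm{\nabla\theta}_H^2 \, \mathrm{d}t \leq ch^2 \nm{v}_{\dot H^1}^2$. Combining the $\rho$- and $\theta$-bounds through the triangle inequality then delivers both \cref{eq:etDelta-etDeltah} and \cref{eq:foo-3} simultaneously. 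The main obstacle is precisely the low regularity of the forcing $\rho_t$ near $t = 0$: the entire argument hinges on the time-integration-by-parts step, which trades the uncontrollable $\nm{\rho_t}$ for the time-integrated gradient of the smooth part $Q_h\rho$ and thereby removes the logarithm that a cruder argument (for instance, a naive splitting of $[0,T]$ at $t = h^2$ using nonsmooth-data semigroup estimates) would produce.
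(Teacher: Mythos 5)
Your proposal is correct, but it takes a genuinely different route from the paper, which in fact gives no self-contained proof of \cref{lem:1002} at all: the paper refers to \cite[Theorem 3.5]{Thomee2006} for \cref{eq:etDelta-etDeltah} and remarks that \cref{eq:foo-3} can be proved like \cite[Lemma 3.6]{Thomee2006}. Those citations point to the classical smooth/nonsmooth-data error theory for the semidiscrete heat equation, where the $L^\infty(H)$ bound for $\dot H^1$ data is obtained by interpolating the error operator $F_h(t) := e^{t\Delta} - e^{t\Delta_h}Q_h$ between the stability bound $\nm{F_h(t)v}_H \leqslant c\nm{v}_H$ and the smooth-data bound $\nm{F_h(t)v}_H \leqslant ch^2\nm{v}_{\dot H^2}$, while the integrated gradient bound comes from an energy argument. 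Your route reconstructs both estimates at once, self-containedly, via the Ritz splitting $u-u_h=\rho+\theta$ and a single energy identity; its crux --- which you identify correctly --- is that $(\rho_t,\theta)$ cannot be bounded directly for $v \in \dot H^1$ (that would require $\int_0^T \nm{u}_{\dot H^3}^2 \, \mathrm{d}t$, which diverges, or leave a logarithm), and your remedy of integrating by parts in time and using $(\rho,\theta_t)=(Q_h\rho,\theta_t)$ together with $(\rho_t,Q_h\rho)=\tfrac12\tfrac{\mathrm{d}}{\mathrm{d}t}\nm{Q_h\rho}_H^2$ reduces everything to $\int_0^T\nm{\rho}_{\dot H^1}^2\,\mathrm{d}t \leqslant ch^2\nm{v}_{\dot H^1}^2$, the Ritz and $L^2$-projection bounds at $t=0$, and the $H^1$-stability of $Q_h$ (legitimate here, since $\mathcal K_h$ is quasi-uniform); I checked the resulting identity and the Young absorptions, and the argument closes, yielding $\sup_t\nm{\theta(t)}_H^2 + \int_0^T\nm{\nabla\theta}_H^2\,\mathrm{d}t \leqslant ch^2\nm{v}_{\dot H^1}^2$ and hence both claims. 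This is the same circle of ideas as the energy-method proofs in Thom\'ee's Chapter 3, so the two approaches are cousins: the paper's citation buys brevity, while your argument buys a self-contained proof that avoids interpolation machinery and delivers \cref{eq:etDelta-etDeltah} and \cref{eq:foo-3} from one estimate. One technical point to settle in a full writeup: for $v \in \dot H^1$, $\rho_t$ fails to be integrable near $t=0$, so the energy identity and the time integration by parts should be carried out on $[\epsilon,t]$ with $\epsilon \to 0$ at the end (or proved for smooth $v$ and extended by density); this limit is harmless because $\nm{\nabla\theta(s)}_H \leqslant c\nm{v}_{\dot H^1}$ holds uniformly and every boundary term converges.
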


\begin{lemma}
  \label{lem:1000}
  For any $ g \in L^2(0,T;H) $, we have
  \begin{align}
    \max_{0 \leqslant t \leqslant T} \Nm{
      \int_t^T \big(
        e^{(s-t)\Delta} - e^{(s-t)\Delta_h} Q_h
      \big) g(s) \, \mathrm{d}s
    }_{H} & \leqslant ch \nm{g}_{L^2(0,T;H)},
    \label{eq:back_deter_inf} \\
    \Big(
      \int_0^T \Nm{
        \int_t^T \big(
          e^{(s-t)\Delta} - e^{(s-t)\Delta_h}Q_h
        \big) g(s) \, \mathrm{d}s
      }_{\dot H^1}^2 \, \mathrm{d}t
    \Big)^{1/2}
    & \leqslant ch \nm{g}_{L^2(0,T;H)}.
    \label{eq:foo-2}
  \end{align}
\end{lemma}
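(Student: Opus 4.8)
The plan is to recognize that the quantity appearing in both inequalities is the finite element error $w(t):=\int_t^T E(s-t)g(s)\,\mathrm ds$ of the deterministic backward parabolic solution operator, where $E(\tau):=e^{\tau\Delta}-e^{\tau\Delta_h}Q_h$. A key simplification is that, since $\Delta$, $\Delta_h$ and $Q_h$ are all self-adjoint on $H$ and $e^{\tau\Delta_h}$ maps $\mathcal V_h$ into $\mathcal V_h$, the operator $e^{\tau\Delta_h}Q_h$ is self-adjoint on $H$, hence so is $E(\tau)$. I would prove the two estimates by different techniques: a duality argument for the uniform-in-time $H$-estimate \cref{eq:back_deter_inf}, and a finite element energy (elliptic projection) argument for the $L^2(0,T;\dot H^1)$-estimate \cref{eq:foo-2}.

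For \cref{eq:back_deter_inf}, I would fix $t$ and test $w(t)$ against an arbitrary $\psi\in H$. Writing $(w(t),\psi)_H=\int_t^T(g(s),E(s-t)\psi)_H\,\mathrm ds$ (using the self-adjointness of $E$) and applying the Cauchy--Schwarz inequality in both the spatial and temporal variables reduces the matter to the integrated smoothing bound $\nm{E(\cdot)\psi}_{L^2(0,T;H)}\leqslant ch\nm{\psi}_H$. I would establish this by splitting the time integral at $\tau=h^2$: on $(0,h^2)$ I use the uniform bound $\nm{E(\tau)\psi}_H\leqslant 2\nm{\psi}_H$ coming from the contractivity of the two semigroups, which contributes $O(h^2\nm{\psi}_H^2)$; on $(h^2,T)$ I use the standard nonsmooth-data finite element smoothing estimate $\nm{E(\tau)\psi}_H\leqslant ch^2\tau^{-1}\nm{\psi}_H$, whose square integrates to $O(h^4\cdot h^{-2}\nm{\psi}_H^2)=O(h^2\nm{\psi}_H^2)$. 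Taking the supremum over $\nm{\psi}_H=1$ then gives a bound uniform in $t$.

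For \cref{eq:foo-2}, let $u:=S_2 g$ and $u_h:=S_2^h g$ (the discrete backward solution, defined exactly as $S_2$ but with $\Delta_h$ and $Q_h$), so that $w=u-u_h$ with $u_h(T)=0$. I would use the classical splitting $w=(u-Q_hu)+(Q_hu-u_h)=:\rho+\theta$ with $\theta\in\mathcal V_h$ and $\theta(T)=0$. For $\rho$, the $L^2$-projection error bound $\nm{(u-Q_hu)(t)}_{\dot H^1}\leqslant ch\nm{u(t)}_{\dot H^2}$ (valid on the quasi-uniform mesh) together with \cref{eq:bee-regu}, which yields $\nm{S_2g}_{L^2(0,T;\dot H^2)}\leqslant c\nm{g}_{L^2(0,T;H)}$, gives $\nm{\rho}_{L^2(0,T;\dot H^1)}\leqslant ch\nm{g}_{L^2(0,T;H)}$. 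For $\theta$, differentiating the two backward equations and using the elliptic projection identity $\Delta_hR_h=Q_h\Delta$ (here $R_h$ is the Ritz projection) yields $\theta'+\Delta_h\theta=\Delta_h\zeta$ with $\zeta:=(Q_h-R_h)u$; testing with $\theta$ and integrating over $(0,T)$ produces $\int_0^T\nm{\theta}_{\dot H^1}^2\,\mathrm dt\leqslant\int_0^T\nm{\zeta}_{\dot H^1}^2\,\mathrm dt$ after absorption, and since $\nm{\zeta(t)}_{\dot H^1}\leqslant\nm{(u-R_hu)(t)}_{\dot H^1}+\nm{(u-Q_hu)(t)}_{\dot H^1}\leqslant ch\nm{u(t)}_{\dot H^2}$, the same regularity bound closes the estimate.

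The main obstacle is the borderline nature of the parabolic smoothing: the pointwise estimate $\nm{E(\tau)\psi}_H\leqslant ch\tau^{-1/2}\nm{\psi}_H$ (equivalently $ch^2\tau^{-1}$) is exactly log-critical, so a naive integration of pointwise bounds fails for \emph{both} inequalities; this is what forces the $\tau=h^2$ splitting in the duality step and the energy/maximal-regularity detour (rather than a direct Duhamel estimate) in the $\dot H^1$ step. The remaining care lies in verifying the self-adjointness of $E(\tau)$, the identity $\Delta_hR_h=Q_h\Delta$, and the $\dot H^1$-stability of $Q_h$ on the quasi-uniform triangulation, all of which are standard.
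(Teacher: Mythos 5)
Your proposal is correct and is essentially the proof the paper intends: the paper gives no details for this lemma, stating only that \cref{eq:back_deter_inf,eq:foo-2} are proved ``similarly to'' Lemma 3.6 of Thom\'ee's book, which is precisely the nonsmooth-data/energy machinery you deploy. Both of your steps are sound---the duality reduction of \cref{eq:back_deter_inf} (legitimate since $e^{\tau\Delta}-e^{\tau\Delta_h}Q_h$ is self-adjoint on $H$) to the integrated smoothing bound, proved by splitting the time integral at $\tau=h^{2}$, and the $\rho$--$\theta$ energy argument for \cref{eq:foo-2} based on $\Delta_h R_h=Q_h\Delta$---and the standard ingredients you invoke ($H^{1}$-stability of $Q_h$ on the quasi-uniform mesh, the nonsmooth-data estimate $\nm{(e^{\tau\Delta}-e^{\tau\Delta_h}Q_h)\psi}_{H}\leqslant ch^{2}\tau^{-1}\nm{\psi}_{H}$, and maximal $L^{2}$ regularity for $S_2$ as in \cref{eq:bee-regu}) are all valid in the paper's convex-polygonal, quasi-uniform setting.
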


\begin{lemma}
  \label{lem:1001}
  For any $ g_h \in L^2(0,T;\mathcal V_h) $, we have
  \begin{equation}
    \label{eq:foo-1}
    \Big(
      \int_0^T \Big\|
        \int_t^T e^{(s-t)\Delta_h} g_h(s) \, \mathrm{d}s
      \Big\|_{\dot H_h^1}^2 \, \mathrm{d}t
    \Big)^{1/2}
    \leqslant \nm{g_h}_{L^2(0,T;\dot H_h^{-1})}.
  \end{equation}
\end{lemma}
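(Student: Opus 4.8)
The plan is to recognize the integral on the left-hand side of \cref{eq:foo-1} as the $ L^2(0,T;\dot H_h^1) $-norm of the finite-dimensional function
\[
  w_h(t) := \int_t^T e^{(s-t)\Delta_h} g_h(s) \, \mathrm{d}s,
  \quad 0 \leqslant t \leqslant T,
\]
and to bound it by a standard energy estimate. Since $ \mathcal V_h $ is finite dimensional and $ \Delta_h $ is a bounded operator on it, differentiating the formula above under the integral sign shows that $ w_h $ is the unique absolutely continuous solution of the backward discrete parabolic problem
\[
  w_h'(t) = -\Delta_h w_h(t) - g_h(t) \quad \text{a.e. } t \in [0,T],
  \qquad w_h(T) = 0,
\]
with $ w_h' \in L^2(0,T;\mathcal V_h) $ whenever $ g_h \in L^2(0,T;\mathcal V_h) $.

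Next I would derive the basic energy identity. Taking the $ H $-inner product of the backward equation with $ w_h(t) $, using $ -(\Delta_h w_h, w_h)_H = \nm{w_h}_{\dot H_h^1}^2 $ and $ (w_h', w_h)_H = \tfrac12 \tfrac{\mathrm d}{\mathrm dt} \nm{w_h}_H^2 $, and integrating over $ [t,T] $ with the terminal condition $ w_h(T) = 0 $, I obtain
\[
  \tfrac12 \nm{w_h(t)}_H^2 +
  \int_t^T \nm{w_h(s)}_{\dot H_h^1}^2 \, \mathrm{d}s =
  \int_t^T (g_h(s), w_h(s))_H \, \mathrm{d}s.
\]
Setting $ t = 0 $ and discarding the nonnegative term $ \tfrac12 \nm{w_h(0)}_H^2 $ yields
\[
  \int_0^T \nm{w_h(s)}_{\dot H_h^1}^2 \, \mathrm{d}s \leqslant
  \int_0^T (g_h(s), w_h(s))_H \, \mathrm{d}s.
\]

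Finally, since $ -\Delta_h $ is self-adjoint and positive definite on $ \mathcal V_h $, splitting its fractional powers gives the duality bound $ (g_h, w_h)_H = ((-\Delta_h)^{-1/2} g_h, (-\Delta_h)^{1/2} w_h)_H \leqslant \nm{g_h}_{\dot H_h^{-1}} \nm{w_h}_{\dot H_h^1} $, and the Cauchy--Schwarz inequality in time then bounds the right-hand side above by $ \nm{g_h}_{L^2(0,T;\dot H_h^{-1})} \big( \int_0^T \nm{w_h(s)}_{\dot H_h^1}^2 \, \mathrm{d}s \big)^{1/2} $. Writing $ A := \big( \int_0^T \nm{w_h(s)}_{\dot H_h^1}^2 \, \mathrm{d}s \big)^{1/2} $, the preceding display reads $ A^2 \leqslant \nm{g_h}_{L^2(0,T;\dot H_h^{-1})} \, A $, whence $ A \leqslant \nm{g_h}_{L^2(0,T;\dot H_h^{-1})} $, which is exactly \cref{eq:foo-1}. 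The computation is elementary, so there is no serious obstacle; the only points needing care are the correct sign in the backward evolution equation (ensuring the $ \dot H_h^1 $-term enters with a favorable sign after integration) and, for full rigor with merely $ L^2 $ data, a routine density argument establishing the energy identity first for time-continuous $ g_h $ and then passing to the limit.
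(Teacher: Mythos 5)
Your proof is correct and is exactly the ``routine energy argument'' that the paper invokes for \cref{eq:foo-1} without spelling it out: you identify $w_h$ as the solution of the backward discrete parabolic equation, test with $w_h$ itself so that the $\dot H_h^1$-term appears with a favorable sign, and close via the duality bound $(g_h,w_h)_H \leqslant \nm{g_h}_{\dot H_h^{-1}}\nm{w_h}_{\dot H_h^1}$ and Cauchy--Schwarz in time. No gap remains; since $\mathcal V_h$ is finite dimensional the absolute continuity of $w_h$ already justifies the energy identity, so even the concluding density remark is optional.
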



\noindent For the proof of \cref{eq:etDelta-etDeltah}, we refer the reader to
\cite[Theorem 3.5]{Thomee2006}. The proofs of
\cref{eq:foo-3,eq:back_deter_inf,eq:foo-2} are
similar to that of \cite[Lemma 3.6]{Thomee2006}. The inequality
\cref{eq:foo-1} can be proved by a routine energy argument.

\begin{lemma} 
  \label{lem:S0-S0h}
  For any $ g \in L_\mathbb F^2(0,T;H) $, we have
  \begin{equation}
    \label{eq:S0-S0h}
    \ssnm{(S_0g - S_0^hg)(T)}_{\dot H^{-1}} +
    \ssnm{(S_0 - S_0^h)g}_{L^2(0,T;H)}
     \leqslant c h \ssnm{g}_{L^2(0,T;H)}.
  \end{equation}
\end{lemma}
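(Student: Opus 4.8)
The plan is to write the difference $S_0 g - S_0^h g$ as a single stochastic convolution and then apply the It\^o isometry to reduce both terms to purely deterministic estimates for the semigroup-error operator
\[
  E_h(\tau) := e^{\tau\Delta} - e^{\tau\Delta_h} Q_h, \qquad \tau \in [0,T].
\]
Indeed, by \cref{eq:S0,eq:S0h-int}, for every $0 \leqslant t \leqslant T$ one has $(S_0 g - S_0^h g)(t) = \int_0^t E_h(t-s) g(s) \, \mathrm{d}W(s)$, $\mathbb P$-a.s. Since $W(\cdot)$ is a scalar Brownian motion and $s \mapsto E_h(t-s) g(s)$ is $\mathbb F$-adapted and square-integrable with values in each Hilbert space at hand, the It\^o isometry yields, for $X \in \{H, \dot H^{-1}\}$,
\[
  \ssnm{(S_0 g - S_0^h g)(t)}_X^2 = \int_0^t \ssnm{E_h(t-s) g(s)}_X^2 \, \mathrm{d}s.
\]
It remains to bound $E_h(\tau)$ acting on $H$-valued data, the essential difficulty being that $g(s)$ lies only in $H$, so that the smooth-data estimates of \cref{lem:1002} cannot be used pointwise in $\tau$.

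For the terminal $\dot H^{-1}$ term I would exploit the self-adjointness of $E_h(\tau)$ on $H$ — which follows from the self-adjointness of $e^{\tau\Delta}$, $e^{\tau\Delta_h}$ and $Q_h$ together with the fact that $Q_h$ acts as the identity on $\mathcal V_h$ — combined with duality. For any $w \in H$ and $\phi \in \dot H^1$,
\[
  (E_h(\tau) w, \phi)_H = (w, E_h(\tau)\phi)_H \leqslant
  \nm{w}_H \nm{E_h(\tau)\phi}_H \leqslant ch\,\nm{w}_H \nm{\phi}_{\dot H^1},
\]
the last step being \cref{eq:etDelta-etDeltah}. Taking the supremum over $\phi \in \dot H^1$ gives the uniform bound $\nm{E_h(\tau) w}_{\dot H^{-1}} \leqslant ch \nm{w}_H$ for all $\tau \in [0,T]$. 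Substituting this into the isometry at $t = T$ yields $\ssnm{(S_0 g - S_0^h g)(T)}_{\dot H^{-1}}^2 \leqslant ch^2 \int_0^T \ssnm{g(s)}_H^2 \, \mathrm{d}s = ch^2 \ssnm{g}_{L^2(0,T;H)}^2$.

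For the $L^2(0,T;H)$ term I would integrate the isometry in $t$ and interchange the order of integration (Fubini), reducing matters to the deterministic estimate
\[
  \int_0^T \nm{E_h(\tau) w}_H^2 \, \mathrm{d}\tau \leqslant ch^2 \nm{w}_H^2
  \quad \forall w \in H.
\]
This nonsmooth-data estimate I would establish by splitting the integral at $\tau = h^2$: on $(0,h^2)$ the trivial uniform bound $\nm{E_h(\tau)}_{H \to H} \leqslant 2$ contributes $ch^2 \nm{w}_H^2$, while on $(h^2, T)$ the parabolic smoothing estimate $\nm{E_h(\tau) w}_H \leqslant ch^2 \tau^{-1} \nm{w}_H$ (see \cite[Theorem 3.5]{Thomee2006}) gives $ch^4 \nm{w}_H^2 \int_{h^2}^T \tau^{-2} \, \mathrm{d}\tau \leqslant ch^2 \nm{w}_H^2$. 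Feeding this back into the $t$-integrated isometry completes the bound for the second term.

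The step I expect to be the crux is precisely this low regularity of the data: because $g(s)$ is merely in $H$, one must work in the weaker norm $\dot H^{-1}$ at the terminal time, where the self-adjointness/duality argument recovers one power of $h$, and must invoke parabolic smoothing to absorb the singularity of $E_h(\tau)$ as $\tau \to 0^+$ in the $L^2(0,T;H)$ estimate. The auxiliary points — measurability and integrability for the It\^o isometry, and the Fubini interchange — are routine.
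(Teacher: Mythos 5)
Your proof is correct, but it takes a genuinely different route from the paper's. The paper never introduces the semigroup-error operator $E_h(\tau)$: it sets $y := S_0g$, $y_h := S_0^hg$ and observes that $e_h := y_h - Q_hy$ satisfies a \emph{pathwise deterministic} parabolic equation, because the noise terms $Q_hg\,\mathrm{d}W$ in the equations for $y_h$ and $Q_hy$ cancel exactly, leaving only the source $\Delta_hQ_hy - Q_h\Delta y$. A deterministic energy/duality estimate for that error equation, the projection estimates $\nm{(I-Q_h)v}_{\dot H^{-1}} \leqslant ch\nm{v}_H$ and $\nm{(I-Q_h)v}_H + \nm{v-\Delta_h^{-1}Q_h\Delta v}_H \leqslant ch\nm{v}_{\dot H^1}$, and the regularity $\ssnm{y(T)}_H + \ssnm{y}_{L^2(0,T;\dot H^1)} \leqslant c\ssnm{g}_{L^2(0,T;H)}$ supplied by the It\^o identity \cref{eq:S0-stab} then yield \cref{eq:S0-S0h}. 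In other words, the paper converts the low regularity of $g$ into one derivative of regularity of $y$ via stochastic smoothing, and afterwards needs only first-order, smooth-data finite element estimates; you instead keep $g$ as the data, use the It\^o isometry to reduce everything to deterministic bounds on $E_h(\tau)$, and pay for the low regularity with the nonsmooth-data smoothing estimate $\nm{E_h(\tau)w}_H \leqslant ch^2\tau^{-1}\nm{w}_H$ together with the split at $\tau = h^2$, plus the self-adjointness/duality trick for the terminal $\dot H^{-1}$ bound. Both arguments are complete. Yours is more modular --- it isolates the stochastic content in a single isometry and then quotes classical deterministic semigroup approximation theory, which transports easily to other norms --- but it invokes the heavier nonsmooth-data machinery of Thom\'ee's Chapter 3; the paper's argument needs only elementary projection estimates and the energy identity, and its key structural observation (cancellation of the noise in the equation for $y_h - Q_hy$) is what makes the purely deterministic error analysis possible.
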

\begin{proof}
  Let $ y := S_0g $ and $ y_h := S_0^h g $. By \cref{eq:y-g} we have
  \[
    \begin{cases}
      \mathrm{d}Q_h y(t) = Q_h\Delta y \, \mathrm{d}t +
      Q_hg(t) \, \mathrm{d}W(t) \quad \forall t \in [0,T], \\
      Q_hy(0) = 0,
    \end{cases}
  \]
  and so from \cref{eq:S0h} we conclude that
  \[
    \begin{cases}
      \mathrm{d}e_h(t) = \Delta_h e_h(t) \, \mathrm{d}t +
      (\Delta_h Q_hy - Q_h \Delta y)(t) \, \mathrm{d}t,
      \quad \forall t \in [0,T], \\
      e_h(0) = 0,
    \end{cases}
  \]
  where $ e_h := y_h - Q_h y $. It is standard that
  \begin{align*}
    & \ssnm{e_h(T)}_{\dot H_h^{-1}} +
    \ssnm{e_h}_{L^2(0,T;\dot H_h^0)} \\
    \leqslant{} &
    c \ssnm{\Delta_h Q_h y - Q_h \Delta y}_{L^2(0,T;\dot H_h^{-2})} \\
    ={} &
    c \ssnm{
      \Delta_h(Q_h y - \Delta_h^{-1}Q_h \Delta y)
    }_{L^2(0,T;\dot H_h^{-2})} \\
    ={} &
    c \ssnm{
      Q_h y - \Delta_h^{-1}Q_h \Delta y
    }_{L^2(0,T;\dot H_h^{0})}.
  \end{align*}
  Hence,
  \begin{align*}
    & \ssnm{(y-y_h)(T)}_{\dot H^{-1}} +
    \ssnm{y - y_h}_{L^2(0,T;H)} \\
    \leqslant{} &
    \ssnm{(y-Q_hy)(T)}_{\dot H^{-1}} +
    c\ssnm{y-Q_hy}_{L^2(0,T;H)} +
    c\ssnm{y - \Delta_h^{-1}Q_h\Delta y}_{L^2(0,T;H)} \\
    \leqslant{} &
    c h \ssnm{y(T)}_{H} + ch \ssnm{y}_{L^2(0,T;\dot H^1)},
  \end{align*}
  by the following two standard estimates (see, e.g.,
  \cite[Theorems 4.4.20 and 5.7.6]{Brenner2008}):
  \begin{align*}
    \nm{(I-Q_h)v}_{\dot H^{-1}} \leqslant c h \nm{v}_H
    \quad \forall v \in H, & \\
    \nm{(I-Q_h)v}_{H} + \nm{v - \Delta_h^{-1}Q_h\Delta v}_H
    \leqslant c h \nm{v}_{\dot H^1}
    \quad \forall v \in \dot H^1. &
  \end{align*}
  Therefore, the desired estimate \cref{eq:S0-S0h} follows from
  \[
    \ssnm{y(T)}_{H} + \ssnm{y}_{L^2(0,T;\dot H^1)}
    \leqslant c \ssnm{g}_{L^2(0,T;H)},
  \]
  which can be obtained by inserting $ \gamma = 0 $ and $ t = T $ into
  \cref{eq:S0-stab}. This completes the proof.
\end{proof}

\subsection{Proof of Theorem 4.1}
To be specific, in this proof $ c $ denotes a positive constant depending only
on $ f $, $ p_T $, $ \mathcal O $, $ T $ and the regularity parameters of $
\mathcal K_h $. Let
\[
  e_h^p := p_h - p, \quad
  e_h^z := z_h - z.
\]
By $ f(\cdot, 0, 0) \in L_\mathbb F^2(0,T;H) $, \cref{eq:f-Lips} and
\[
  (p,z) \in D_\mathbb F([0,T];L^2(\Omega; H))
  \times L_\mathbb F^2(0,T;H),
\]
we have
\begin{equation}
  \label{eq:f(p,z)-esti}
  f(\cdot, p(\cdot), z(\cdot)) \in L_\mathbb F^2(0,T;H).
\end{equation}
We divide the proof into the following four steps.

{\it Step 1}. Let us prove, for any $ 0 \leqslant t < T $,
\begin{equation} 
  \label{eq:ehz-step1}
  \ssnm{e_h^z}_{L^2(t,T;H)}
  \leqslant c \Big(
    h + \sqrt{T-t} \big(
      \ssnm{e_h^p}_{L^2(t,T;H)} +
      \ssnm{e_h^z}_{L^2(t,T;H)}
    \big)
  \Big).
\end{equation}
To this end, let $ 0 \leqslant t < T $ be arbitrary but fixed. Define
\begin{equation}
  \label{eq:sigmah}
  \sigma_h(s) := \begin{cases}
    0 & \text{ if } 0 \leqslant s < t, \\
    (z_h-Q_hz)(s) & \text{ if } t \leqslant s \leqslant T.
  \end{cases}
\end{equation}
By \cref{eq:S0h-int,eq:sigmah} we get $ \mathbb P $-a.s.
\begin{equation}
  \label{eq:S0hsigmah}
  (S_0^h \sigma_h)(s) =
  \begin{cases}
    0 & \text{ if } 0 \leqslant s \leqslant t, \\
    \int_t^s e^{(r-t)\Delta_h} (z_h-Q_hz)(r) \, \mathrm{d}W(r)
    & \text{ if } t < s \leqslant T.
  \end{cases}
\end{equation}
It follows that, for any $ t < s \leqslant T $,
\begin{align}
  \ssnm{(S_0^h\sigma_h)(s)}_{H} &=
  \ssnm{
    \int_t^s e^{(r-t)\Delta_h}(z_h - Q_hz)(r) \, \mathrm{d}W(r)
  }_{H} \notag \\
  &= \Big(
    \int_t^s \ssnm{e^{(r-t)\Delta_h} (z_h-Q_hz)(r)}_{H}^2
    \, \mathrm{d}r
  \Big)^{1/2} \notag \\
  &\leqslant \Big(
    \int_t^s \ssnm{(z_h-Q_hz)(r)}_{H}^2
    \, \mathrm{d}r
  \Big)^{1/2} \notag \\
  & \leqslant \ssnm{z_h - Q_hz}_{L^2(t,T;H)}.
  \label{eq:S0hsigmah-esti}
\end{align}
Inserting $ g = 0 $, $ \sigma = \sigma_h $ and $ v=0 $ into
\cref{eq:p-z} yields
\begin{align*} 
  \int_t^T \big[ Q_hz(s), \, \sigma_h(s) \big] \, \mathrm{d}s =
  \int_t^T \big[ (S_0\sigma_h)(s), \, f(s,p(s),z(s)) \big] \, \mathrm{d}s +
  \big[ (S_0\sigma_h)(T), \, p_T \big],
\end{align*}
and inserting $ g_h = 0 $ and $ v_h = 0 $ into \cref{eq:ph-zh} gives
\begin{align*} 
  \int_t^T \big[ z_h(s), \, \sigma_h(s) \big] \, \mathrm{d}s =
  \int_t^T \big[ (S_0^h\sigma_h)(s), \, f(s,p_h(s),z_h(s)) \big] \, \mathrm{d}s +
  \big[ (S_0^h\sigma_h)(T), \, p_T \big].
\end{align*}
Combining the two equalities above yields
\begin{equation}
  \label{eq:zh-Qhz-trans}
  \ssnm{z_h-Q_hz}_{L^2(t,T;H)}^2 =
  \int_t^T [(z_h-Q_hz)(s), \sigma_h(s)] \, \mathrm{d}s =
  \mathbb I_1 + \mathbb I_2 + \mathbb I_3,
\end{equation}
where
\begin{align*}
  \mathbb I_1 &:=
  \int_t^T \big[
    (S_0^h\sigma_h)(s), \, f(s,p_h(s),z_h(s)) - f(s,p(s),z(s))
  \big] \, \mathrm{d}s, \\
  \mathbb I_2 &:= \int_t^T \big[
    (S_0^h\sigma_h - S_0\sigma_h)(s), \,
    f(s,p(s),z(s))
  \big] \, \mathrm{d}s, \\
  \mathbb I_3 &:= \big[ (S_0^h\sigma_h - S_0\sigma_h)(T), \, p_T \big].
\end{align*}
For $ \mathbb I_1 $ we have
\begin{small}
\begin{align*}
  \mathbb I_1 & \leqslant
  \int_t^T \ssnm{(S_0^h\sigma_h)(s)}_{H}
  \ssnm{f(s,p_h(s),z_h(s)) - f(s,p(s),z(s))}_{H} \, \mathrm{d}s \\
  & \leqslant
  \ssnm{z_h-Q_hz}_{L^2(t,T;H)}
  \int_t^T
  \ssnm{f(s,p_h(s),z_h(s)) - f(s,p(s),z(s))}_{H} \, \mathrm{d}s
  \quad\text{(by \cref{eq:S0hsigmah-esti})} \\
  & \leqslant
  c\ssnm{z_h-Q_hz}_{L^2(t,T;H)}
  \int_t^T
  \ssnm{e_h^p(s)}_{H} \, \mathrm{d}s +
  \ssnm{e_h^z(s)}_{H} \, \mathrm{d}s
  \quad\text{(by \cref{eq:f-Lips})} \\
  & \leqslant
  c \sqrt{T-t} \ssnm{z_h-Q_hz}_{L^2(t,T;H)}
  \big(
    \ssnm{e_h^p}_{L^2(t,T;H)} +
    \ssnm{e_h^z}_{L^2(t,T;H)}
  \big).
\end{align*}
\end{small}
For $ \mathbb I_2 $ we have
\begin{align*} 
  \mathbb I_2 & \leqslant
  \ssnm{(S_0^h-S_0)\sigma_h}_{L^2(t,T;H)}
  \ssnm{f(\cdot,p(\cdot),z(\cdot))}_{L^2(t,T;H)} \\
  & \leqslant c h \ssnm{\sigma_h}_{L^2(t,T;H)}
  \ssnm{f(\cdot,p(\cdot),z(\cdot)}_{L^2(t,T;H)}
  \quad\text{(by \cref{eq:S0-S0h})} \\
  &= ch \ssnm{z_h-Q_hz}_{L^2(t,T;H)}
  \ssnm{f(\cdot,p(\cdot),z(\cdot)}_{L^2(t,T;H)}
  \quad\text{(by \cref{eq:sigmah})} \\
  & \leqslant ch \ssnm{z_h - Q_hz}_{L^2(t,T;H)}
  \quad \text{(by \eqref{eq:f(p,z)-esti})}.
\end{align*}
For $ \mathbb I_3 $ we have
\begin{align*}
  \mathbb I_3 & \leqslant
  \ssnm{(S_0^h\sigma_h - S_0\sigma_h)(T)}_{\dot H^{-1}}
  \ssnm{p_T}_{\dot H^1} \\
  & \leqslant c h \ssnm{\sigma_h}_{L^2(0,T;H)}
  \ssnm{p_T}_{\dot H^1} \quad\text{(by \cref{eq:S0-S0h})} \\
  & \leqslant c h \ssnm{\sigma_h}_{L^2(0,T;H)} \\
  & = ch \ssnm{z_h-Q_hz}_{L^2(t,T;H)}
  \quad\text{(by \cref{eq:sigmah})}.
\end{align*}
Combining \cref{eq:zh-Qhz-trans} and the above estimates of $ \mathbb I_1 $, $
\mathbb I_2 $ and $ \mathbb I_3 $, we obtain
\begin{align*}
  &\ssnm{z_h-Q_hz}_{L^2(t,T;H)}^2 \\
   \leqslant &
  c \Big(
    h + \sqrt{T-t} \ssnm{e_h^p}_{L^2(t,T;H)} +
    \sqrt{T-t} \ssnm{e_h^z}_{L^2(t,T;H)}
  \Big)
  \ssnm{z_h-Q_hz}_{L^2(t,T;H)},
\end{align*}
which implies
\begin{equation}
  \label{eq:bj-1}
  \ssnm{z_h - Q_hz}_{L^2(t,T;H)} \leqslant
  ch + c \sqrt{T-t} \big(
    \ssnm{e_h^p}_{L^2(t,T;H)} +
    \ssnm{e_h^z}_{L^2(t,T;H)}
  \big).
\end{equation}
By \cref{thm:regu} we have $ z \in L_\mathbb F^2(0,T;\dot H^1) $, and so we have
the standard estimate
\begin{equation}
  \label{eq:bj-2}
  \ssnm{z - Q_hz}_{L^2(0,T;H)} \leqslant ch.
\end{equation}
Hence, the desired estimate \cref{eq:ehz-step1} follows from
\cref{eq:bj-1,eq:bj-2}.

{\it Step 2}. Let $ c^* $ be the particular constant $ c $ in the inequality
\cref{eq:ehz-step1}. Let $ t^* := \max\{0, T - 1/(2c^*)^2\} $, and so by
\cref{eq:ehz-step1} we get
\begin{equation}
  \label{eq:ehz}
  \ssnm{e_h^z}_{L^2(t,T;H)} \leqslant
  c \big( h + \ssnm{e_h^p}_{L^2(t,T;H)} \big)
  \quad \forall t \in [t^*,T].
\end{equation}
By \cref{eq:trans-p-int,eq:trans-ph-int} we have, for any
$ 0 \leqslant t \leqslant T $,
\[
  e_h^p(t) = \mathbb I_4 + \mathbb I_5 + \mathbb I_6
  \quad \mathbb P \text{-a.s.,}
\]
where
\begin{align*}
  \mathbb I_4 &:= \mathbb E_t \int_t^T e^{(s-t)\Delta_h}
  Q_h \big( f(s,p_h(s),z_h(s)) - f(s,p(s),z(s)) \big) \, \mathrm{d}s, \\
  \mathbb I_5 &:= \mathbb E_t \int_t^T
  \big(
    e^{(s-t)\Delta_h}Q_h - e^{(s-t)\Delta}
  \big) f(s,p(s),z(s)) \, \mathrm{d}s, \\
  \mathbb I_6 &:= \mathbb E_t\big(
    e^{(T-t)\Delta_h} Q_h - e^{(T-t)\Delta}
  \big) p_T.
\end{align*}
For $ \mathbb I_4 $ we have
\begin{align*}
  \ssnm{\mathbb I_4}_{H}^2 & \leqslant
  \ssnmB{
    \int_t^T e^{(s-t)\Delta_h} Q_h \big(
      f(s,p_h(s),z_h(s)) - f(s,p(s),z(s))
    \big) \, \mathrm{d}s
  }_{H}^2 \\
  & \leqslant
  \Big(
    \int_t^T \ssnm{
      e^{(s-t)\Delta_h} Q_h \big(
        f(s,p_h(s),z_h(s)) - f(s,p(s),z(s))
      \big)
    }_{H} \, \mathrm{d}s
  \Big)^2 \\
  & \leqslant
  \Big(
    \int_t^T \ssnm{
      f(s,p_h(s),z_h(s)) - f(s,p(s),z(s))
    }_{H} \, \mathrm{d}s
  \Big)^2 \\
  & \leqslant
  c \int_t^T \ssnm{
    f(s,p_h(s),z_h(s)) - f(s,p(s),z(s))
  }_{H}^2 \, \mathrm{d}s \\
  & \leqslant c \big(
    \ssnm{e_h^p}_{L^2(t,T;H)}^2 +
    \ssnm{e_h^z}_{L^2(t,T;H)}^2
  \big) \quad \text{(by \cref{eq:f-Lips}).}
\end{align*}
For $ \mathbb I_5 $ we have
\begin{align*} 
  \ssnm{\mathbb I_5}_{H}^2 &
  \leqslant \ssnmB{
    \int_t^T \big(
      e^{(s-t)\Delta_h} Q_h - e^{(s-t)\Delta}
    \big) f(s,p(s),z(s))
    \, \mathrm{d}s
  }_{H}^2 \\
  & \leqslant c h^2 \ssnm{f(\cdot,p(\cdot),z(\cdot))}_{
    L^2(t,T;H)
  }^2 \quad\text{(by \cref{eq:back_deter_inf})} \\
  & \leqslant ch^2 \quad\text{(by \eqref{eq:f(p,z)-esti})}.
\end{align*}
For $ \mathbb I_6 $ we have
\begin{align*}
  \ssnm{\mathbb I_6}_{H}^2
  & \leqslant \ssnm{
    \Big(e^{(T-t)\Delta_h} Q_h - e^{(T-t)\Delta}\Big) p_T
  }_{H}^2 \\
  & \leqslant c h^2 \ssnm{p_T}_{\dot H^1}^2
  \quad \text{(by \cref{eq:etDelta-etDeltah})} \\
  & \leqslant c h^2.
\end{align*}
Combining the above estimates of $ \mathbb I_4 $, $ \mathbb I_5 $ and $
\mathbb I_6 $, we obtain
\begin{equation}
  \label{eq:eve-2}
  \ssnm{e_h^p(t)}_{H}^2 \leqslant
  c \big(
    h^2 + \ssnm{e_h^p}_{L^2(t,T;H)}^2 +
    \ssnm{e_h^z}_{L^2(t,T;H)}^2
  \big) \quad \forall t \in [0,T],
\end{equation}
which, together with \cref{eq:ehz}, implies
\begin{align*}
  \ssnm{e_h^p(t)}_{H}^2 \leqslant
  c\big( h^2 + \ssnm{e_h^p}_{L^2(t,T;H)}^2 \big)
  \quad \forall t \in [t^*,T].
\end{align*}
Using the Gronwall's inequality then gives
\[
  \sup_{t^* \leqslant t \leqslant T}
  \ssnm{e_h^p(t)}_{H} \leqslant ch.
\]
Hence, by \cref{eq:ehz} we get
\begin{equation}
  \label{eq:eve}
  \sup_{t^* \leqslant t \leqslant T}
  \ssnm{e_h^p(t)}_{H} +
  \ssnm{e_h^z}_{L^2(t^*,T;H)} \leqslant
  ch.
\end{equation}

{\it Step 3}. Note that $ t^* $ depends only on $ f $, $ p_T $, $ \mathcal O $,
$ T $ and the regularity parameters of $ \mathcal K_h $. With the estimate $
\ssnm{e_h^p(t^*)}_H \leqslant c h $ (see \cref{eq:eve}) and similar arguments in
the proof of \cref{eq:ehz}, we get
\begin{equation}
  \label{eq:eve-3}
  \ssnm{e_h^z}_{L^2(t,t^*;H)} \leqslant
  c\big( h + \ssnm{e_h^p}_{L^2(t,t^*;H)} \big)
  \quad \forall t \in \big[ \max\{0,2t^*-T\}, \, t^* \big],
\end{equation}
which, together with \cref{eq:ehz}, implies
\[
  \ssnm{e_h^z}_{L^2(t,T;H)} \leqslant
  c\big( h + \ssnm{e_h^p}_{L^2(t,T;H)} \big)
  \quad \forall t \in \big[ \max\{0,2t^*-T\}, \, T \big].
\]
Combining the above estimate and \cref{eq:eve-2}, we obtain
\[
  \ssnm{e_h^p(t)}_H^2 \leqslant c\big(
    h^2 + \ssnm{e_h^p}_{L^2(t,T;H)}^2
  \big) \quad \forall t \in \big[ \max\{0,2t^*-T\}, \, T \big].
\]
The Gronwall's inequality then leads to
\[
  \sup_{\max\{0,2t^*-T\} \leqslant t \leqslant T}
  \ssnm{e_h^p(t)}_H \leqslant ch,
\]
and so by \cref{eq:eve-3} we obtain
\[
  \sup_{\max\{0,2t^*-T\} \leqslant t \leqslant T}
  \ssnm{e_h^p(t)}_H + \ssnm{e_h^z}_{L^2(\max\{0,2t^*-T\},T;H)}
  \leqslant ch.
\]
Repeating the above procedure several times finally yields
\begin{equation} 
  \label{eq:conv-1}
  \sup_{0 \leqslant t \leqslant T}
  \ssnm{e_h^p(t)}_{H} +
  \ssnm{e_h^z}_{L^2(0,T;H)} \leqslant ch.
\end{equation}

{\it Step 4}. Let us prove
\begin{equation}
  \label{eq:conv-h1}
  \ssnm{p-p_h}_{L^2(0,T;\dot H^1)} \leqslant ch.
\end{equation}
Set
\begin{align*}
  F(t) &:= f(t, p(t), z(t)), \quad 0 \leqslant t \leqslant T, \\
  F_h(t) &:= Q_h f(t, p_h(t), z_h(t)), \quad 0 \leqslant t \leqslant T.
\end{align*}
By \cref{eq:f-Lips,eq:conv-1}, we have
\begin{align}
  \ssnm{F_h - Q_hF}_{L^2(0,T;H)} &=
  \ssnm{
    Q_h \big(
      f(\cdot, p_h(\cdot), z_h(\cdot)) -
      f(\cdot,p(\cdot), z(\cdot))
    \big)
  }_{L^2(0,T;H)} \notag \\
  & \leqslant
  \ssnm{
    f(\cdot, p_h(\cdot), z_h(\cdot)) -
    f(\cdot,p(\cdot), z(\cdot))
  }_{L^2(0,T;H)} \notag \\
  & \leqslant c \big(
    \ssnm{e_h^p}_{L^2(0,T;H)} +
    \ssnm{e_h^z}_{L^2(0,T;H)}
  \big) \notag \\
  & \leqslant c h. \label{eq:Fh-QhF}
\end{align}
By \cref{eq:trans-p-int,eq:trans-ph-int} we obtain, for any $ t \in [0,T] $,
\begin{align*}
  (p - p_h)(t) = \mathbb E_t\big( \eta_1 + \eta_2 + \eta_3 \big)(t)
  \quad \mathbb P \text{-a.s.,}
\end{align*}
where
\begin{align*}
  \eta_1(t) &:= \int_t^T \big(
    e^{(s-t)\Delta} - e^{(s-t)\Delta_h} Q_h
  \big) F(s) \, \mathrm{d}s, \\
  \eta_2(t) &:= \int_t^T e^{(s-t)\Delta_h} (Q_hF - F_h)(s) \, \mathrm{d}s, \\
  \eta_3(t) &:= \big(
    e^{(T-t)\Delta} - e^{(T-t)\Delta_h} Q_h
  \big) p_T.
\end{align*}
It follows that
\begin{align}
  \ssnm{p-p_h}_{L^2(0,T;\dot H^1)} & \leqslant
  \ssnm{\eta_1 + \eta_2 + \eta_3}_{L^2(0,T;\dot H^1)} \notag \\
  & \leqslant
  \ssnm{\eta_1}_{L^2(0,T;\dot H^1)} +
  \ssnm{\eta_2}_{L^2(0,T;\dot H^1)} +
  \ssnm{\eta_3}_{L^2(0,T;\dot H^1)}.
  \label{eq:p-ph}
\end{align}
For $ \eta_1 $ we have
\begin{align*} 
  \ssnm{\eta_1}_{L^2(0,T;\dot H^1)} &=
  \Big(
    \mathbb E \int_0^T \nmB{
      \int_t^T \big(
        e^{(s-t)\Delta} - e^{(s-t)\Delta_h}Q_h
      \big) F(s) \, \mathrm{d}s
    }_{\dot H^1}^2 \, \mathrm{d}t
  \Big)^{1/2} \\
  & \leqslant
  c h \Big(
    \mathbb E \nm{F}_{L^2(0,T;H)}^2
  \Big)^{1/2} \quad \text{(by \cref{eq:foo-2})} \\
  & \leqslant ch \quad \text{(by \eqref{eq:f(p,z)-esti})}.
\end{align*}
For $ \eta_2 $ we have
\begin{align*}
  \ssnm{\eta_2}_{L^2(0,T;\dot H^1)} &=
  \Big(
    \mathbb E \int_0^T \nmB{
      \int_t^T e^{(s-t)\Delta_h} (Q_hF - F_h)(s) \, \mathrm{d}s
    }_{\dot H^1}^2 \, \mathrm{d}t
  \Big)^{1/2} \\
  & \leqslant \Big(
    \mathbb E \nm{F_h - Q_hF}_{L^2(0,T;H)}^2
  \Big)^{1/2} \quad \text{(by \cref{eq:foo-1})} \\
  & = \ssnm{F_h - Q_hF}_{L^2(0,T;H)} \\
  & \leqslant ch \quad \text{(by \cref{eq:Fh-QhF}).}
\end{align*}
For $ \eta_3 $ we have
\begin{align*}
  \ssnm{\eta_3}_{L^2(0,T;\dot H^1)} & =
  \Big(
    \mathbb E\int_0^T \nmb{
      (e^{(T-t)\Delta} - e^{(T-t)\Delta_h} Q_h)p_T
    }_{\dot H^1}^2 \, \mathrm{d}t
  \Big)^{1/2} \\
  & \leqslant
  ch \Big(
    \mathbb E \nm{p_T}_{\dot H^1}^2 \, \mathrm{d}t
  \Big)^{1/2} \quad \text{(by \cref{eq:foo-3})} \\
  & = ch \ssnm{p_T}_{\dot H^1} \leqslant ch.
\end{align*}
Combining \cref{eq:p-ph} and the above estimates of $ \eta_1 $, $ \eta_2 $ and $
\eta_3 $ yields \cref{eq:conv-h1}. Finally, summing up
\cref{eq:conv-1,eq:conv-h1} proves \cref{eq:conv} and thus concludes the proof
of \cref{thm:conv}.

\section{Application to a stochastic linear quadratic control problem}
\label{sec:application}
\subsection{Continuous problem}
We consider the following stochastic linear quadratic control problem:
\begin{equation} 
  \label{eq:optim}
  \min_{
    \substack{
      u \in L_\mathbb F^2(0,T;H) \\
      y \in L_\mathbb F^2(0,T;H)
    }
  } \frac12 \ssnm{y-y_d}_{L^2(0,T; H)}^2 +
  \frac{\nu}2 \ssnm{u}_{L^2(0,T; H)}^2,
\end{equation}
subject to the state equation
\begin{equation} 
  \label{eq:state}
  \begin{cases}
    \mathrm{d}y(t) = (\Delta y + \alpha_0 y + \alpha_1 u)(t) \, \mathrm{d}t +
    (\alpha_2 y + \alpha_3 u)(t) \, \mathrm{d}W(t),
    & 0 \leqslant t \leqslant T, \\
    y(0) = 0,
  \end{cases}
\end{equation}
where $ 0 < \nu, T < \infty $, $ \alpha_0, \alpha_1, \alpha_2, \alpha_3 \in
L^\infty(0,T) $, and $ y_d \in L_\mathbb F^2(0,T;H) $. It is standard that
(see \cite[Theorem 8.1]{Lu2014}) problem \cref{eq:optim} admits a unique
solution $ (\bar u, \bar y) $, and
\begin{equation}
  \label{eq:optim_cond}
  \bar u = - \nu^{-1}(\alpha_1 \bar p + \alpha_3 \bar z),
\end{equation}
where $ (\bar p, \bar z) $ is the transposition solution of the backward
stochastic parabolic equation
\begin{equation}
  \label{eq:barp-barz}
  \begin{cases}
    \mathrm{d}\bar p(t) = -(
    \Delta \bar p + \alpha_0 \bar p +
    \bar y - y_d + \alpha_2 \bar z
    )(t) \, \mathrm{d}t +
    \bar z(t) \, \mathrm{d}W(t), \quad 0 \leqslant t \leqslant T, \\
    \bar p(T) = 0.
  \end{cases}
\end{equation}
Since \cref{thm:regu} implies
\[
  \alpha_1 \bar p + \alpha_3 \bar z \in
  L_\mathbb F^2(0,T;\dot H^1),
\]
we then obtain
\begin{equation}
  \label{eq:baru-regu}
  \bar u \in L_\mathbb F^2(0,T; \dot H^1).
\end{equation}
Moreover, we have
\begin{equation}
  \label{eq:bary-regu}
  \bar y \in L_\mathbb F^2(\Omega;C([0,T];\dot H^1)) \cap L_\mathbb F^2(0,T;\dot H^2).
\end{equation}

\begin{remark}
  For the theoretical treatment of the stochastic linear quadratic
  control problems, we refer the reader to \cite{LuZhangbook2021} and
  the references therein.
\end{remark}

\begin{remark}
  The regularity result \cref{eq:bary-regu} is standard. It can be easily proved
  by the standard Galerkin method and the theory of the finite-dimensional linear
  SDEs (see \cite[Chapter 3]{Pardoux2014}).
\end{remark}


\begin{remark}
  We note that Zhou and Li~\cite{ZhouLi2021} established the convergence of a
  full discretization for a Neumann boundary control problem governed by a
  stochastic parabolic equation with additive boundary noise and general
  filtration. When $ \mathbb F $ is the natural filtration of the Brownian
  motion, we refer the reader to \cite{Dunst2016}, \cite{LiZhou2020} and
  \cite{Wang2020a} for some related numerical analysis.
\end{remark}
\subsection{Spatially semi-discrete problem}
The spatial semi-discretization of problem \cref{eq:optim} reads as follows:
\begin{small}
\begin{equation}
  \label{eq:optim_h}
  \min_{
    \substack{
      u_h \in L_\mathbb F^2(0,T;\mathcal V_h) \\
      y_h \in L_\mathbb F^2(0,T;\mathcal V_h)
    }
  } \frac12 \ssnm{y_h-y_d}_{L^2(0,T; H)}^2 +
  \frac{\nu}2 \ssnm{u_h}_{L^2(0,T; H)}^2,
\end{equation}
\end{small}
subject to the state equation
\begin{small}
\begin{equation}
  \label{eq:state_h}
  \begin{cases}
    \mathrm{d}y_h(t) =
    (\Delta_h y_h + \alpha_0 y_h + \alpha_1 u_h)(t) \, \mathrm{d}t +
    (\alpha_2 y_h + \alpha_3 u_h)(t) \, \mathrm{d}W(t),
    & 0 \leqslant t \leqslant T, \\
    y_h(0) = 0.
  \end{cases}
\end{equation}
\end{small}
Similarly to problem \cref{eq:optim}, problem \cref{eq:optim_h} admits a unique
solution $ (\bar u_h, \bar y_h) $, and
\begin{equation} 
  \label{eq:optim_cond_h}
  \bar u_h = - \nu^{-1} (
  \alpha_1 \bar p_h + \alpha_3\bar z_h
  ),
\end{equation}
where $ (\bar p_h, \bar z_h) $ is the transposition solution of the spatially
semi-discrete backward stochastic parabolic equation
\begin{small}
\begin{equation}
  \label{eq:barph-barzh}
  \begin{cases}
    \mathrm{d}\bar p_h(t) = -(
    \Delta_h \bar p_h + \alpha_0 p_h +
    \bar y_h - Q_hy_d + \alpha_2 \bar z_h
    )(t) \, \mathrm{d}t + \bar z_h(t) \, \mathrm{d}W(t),
    \, 0 \leqslant t \leqslant T, \\
    \bar p_h(T) = 0.
  \end{cases}
\end{equation}
\end{small}

The main result of this section is the following error estimate.
\begin{theorem} 
  \label{thm:optim-conv}
  Let $ (\bar u, \bar y) $ and $ (\bar u_h, \bar y_h) $ be the solutions of
  problems \cref{eq:optim,eq:optim_h}, respectively. Then
  \begin{equation}
    \label{eq:optim-conv}
    \ssnm{\bar u - \bar u_h}_{L^2(0,T;H)} +
    \ssnm{\bar y - \bar y_h}_{L^2(0,T;H)}
    \leqslant ch.
  \end{equation}
\end{theorem}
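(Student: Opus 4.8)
The plan is to recast both control problems as operator equations and to break the coupling of the optimality system by exploiting the uniform coercivity coming from the Tikhonov parameter $\nu$. Since the state equations \cref{eq:state,eq:state_h} have zero initial data, the control-to-state maps $v \mapsto y$ and $v_h \mapsto y_h$ are bounded linear operators; I denote them by $\mathcal S$ and $\mathcal S_h$ (the latter acting on $H$-valued controls through $Q_h$), and both are bounded on $L^2_\mathbb F(0,T;H)$ uniformly in $h$. A routine application of It\^o's formula to $(y(t),\bar p(t))_H$, using the self-adjointness of $\Delta,\Delta_h$ and that $\alpha_0,\alpha_2$ are constant in space, yields the duality identity $\mathbb E\int_0^T (\mathcal S v, g)_H\,\mathrm dt = \mathbb E\int_0^T (v, \alpha_1 p + \alpha_3 z)_H\,\mathrm dt$, where $(p,z)$ is the transposition solution of the adjoint equation with source $g$; that is, $\mathcal S^* g = \alpha_1 p + \alpha_3 z$, and analogously for $\mathcal S_h^*$. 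With this, the optimality conditions \cref{eq:optim_cond,eq:optim_cond_h} become $(\nu I + \mathcal S^*\mathcal S)\bar u = \mathcal S^* y_d$ and $(\nu I + \mathcal S_h^*\mathcal S_h)\bar u_h = \mathcal S_h^* y_d$, and $\nu I + \mathcal S^*\mathcal S$ is coercive with constant $\nu$.

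First I would record the uniform stability $\ssnm{\bar u_h}_{L^2(0,T;H)} + \ssnm{\bar y_h}_{L^2(0,T;H)} \leqslant c$, obtained by testing \cref{eq:optim_h} with the admissible control $u_h = 0$ and using the boundedness of $\mathcal S_h$. Next, subtracting the two operator equations and inserting $\bar y_h = \mathcal S_h\bar u_h$ gives
\[
  (\nu I + \mathcal S^*\mathcal S)(\bar u - \bar u_h)
  = (\mathcal S^* - \mathcal S_h^*)y_d
  - \mathcal S^*(\mathcal S - \mathcal S_h)\bar u_h
  - (\mathcal S^* - \mathcal S_h^*)\bar y_h .
\]
Pairing with $\bar u - \bar u_h$ and using coercivity, the left-hand side is bounded below by $\nu\ssnm{\bar u - \bar u_h}_{L^2(0,T;H)}^2$, while every term on the right is an $O(h)$ quantity times $\ssnm{\bar u - \bar u_h}_{L^2(0,T;H)}$ (for the middle term one first moves $\mathcal S^*$ onto $\bar u - \bar u_h$ and uses the boundedness of $\mathcal S$). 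This yields $\ssnm{\bar u - \bar u_h}_{L^2(0,T;H)} \leqslant ch$, after which $\bar y - \bar y_h = (\mathcal S - \mathcal S_h)\bar u + \mathcal S_h(\bar u - \bar u_h)$ together with the stability of $\mathcal S_h$ gives $\ssnm{\bar y - \bar y_h}_{L^2(0,T;H)} \leqslant ch$, proving \cref{eq:optim-conv}.

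The two ingredients feeding the right-hand side are the backward estimate $\ssnm{(\mathcal S^* - \mathcal S_h^*)g}_{L^2(0,T;H)} \leqslant ch\,\ssnm{g}_{L^2(0,T;H)}$ and the forward estimate $\ssnm{(\mathcal S - \mathcal S_h)g}_{L^2(0,T;H)} \leqslant ch\,\ssnm{g}_{L^2(0,T;H)}$. The backward estimate is \cref{thm:conv} applied to the adjoint equation, which, with the affine spatially-constant coefficients $\alpha_i$ and terminal value $0$, is a genuine special case of \cref{eq:model} whose semi-discretization \cref{eq:semi} coincides with \cref{eq:barph-barzh}; the required $\dot H^1/\dot H^2$ regularity of the continuous adjoint is supplied by \cref{thm:regu}, and the constant is linear in $\ssnm{g}_{L^2(0,T;H)}$ because the equation is linear in $(p,z,g)$. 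I would deduce the forward estimate from the backward one by transposition: for any $w$, $\mathbb E\int_0^T ((\mathcal S - \mathcal S_h)g, w)_H\,\mathrm dt = \mathbb E\int_0^T (g, (\mathcal S^* - \mathcal S_h^*)w)_H\,\mathrm dt \leqslant ch\,\ssnm{g}_{L^2(0,T;H)}\ssnm{w}_{L^2(0,T;H)}$, so no separate forward argument is needed; this is exactly what permits applying the forward estimate to the merely $L^2$-bounded discrete control $\bar u_h$.

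The main obstacle is the full coupling of the optimality system: the discrete adjoint \cref{eq:barph-barzh} is driven by the discrete state $\bar y_h$ rather than by $\bar y$, so \cref{thm:conv} cannot be invoked for $(\bar p,\bar z)$ against $(\bar p_h,\bar z_h)$ directly, and a naive chaining of the state, adjoint and control error bounds closes only up to a feedback loop with an uncontrolled constant. The device that breaks this loop is the coercivity $\nu I + \mathcal S^*\mathcal S \geqslant \nu I$, which turns the coupling into the single scalar inequality $\nu\ssnm{\bar u - \bar u_h}_{L^2(0,T;H)}^2 \leqslant ch\,\ssnm{\bar u - \bar u_h}_{L^2(0,T;H)}$. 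The remaining technical care is (i) to confirm that \cref{thm:conv} holds in the uniform, operator-norm form with constant proportional to the source norm, and (ii) to control the forward error on the rough control $\bar u_h$; both are handled by the linearity of the equations and the transposition argument above.
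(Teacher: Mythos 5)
Your proposal is correct in substance, but it takes a genuinely different route from the paper's proof, and the comparison is instructive. The paper also runs on the $\nu$-coercivity (following the standard argument of Hinze et al.), but it is organized so that \cref{thm:conv} is applied exactly \emph{once}, to data fixed at the continuous level: it introduces an auxiliary discrete adjoint $(p_h,z_h)$ driven by $Q_h(\bar y - y_d)$ and an auxiliary discrete state $y_h$ driven by $Q_h\bar u$, splits $\nu\ssnm{\bar u-\bar u_h}^2_{L^2(0,T;H)}$ into $\mathbb I_1$ (handled by \cref{thm:conv} for the pair \cref{eq:barp-barz} and its own semi-discretization) and $\mathbb I_2$ (converted into state differences by the duality identity of \cref{lem:lxy}, then absorbed via Young's inequality together with the superconvergence bound \cref{eq:bary-yh}, which rests on \cref{lem:yh-stab} and the regularity \cref{eq:bary-regu}). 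Your operator formulation instead needs the \emph{uniform}, operator-norm estimate $\ssnm{(\mathcal S^*-\mathcal S_h^*)g}_{L^2(0,T;H)} \leqslant ch\,\ssnm{g}_{L^2(0,T;H)}$ with $c$ independent of $g$ and $h$ (essential, since you apply it to the $h$-dependent data $\bar y_h$ and, via duality, $\bar u_h$). This is strictly more than \cref{thm:conv} states, where the constant depends on the data in an unspecified way; it is nonetheless true for the linear adjoint equation with zero terminal datum, either by re-running the proofs of \cref{thm:regu,thm:conv} and checking that every constant is controlled by $\ssnm{g}_{L^2(0,T;H)}$ (using the $L^2$-stability of transposition solutions and a quantitative form of \cref{thm:regu}), or more abstractly from linearity of $g \mapsto (\mathcal S^*-\mathcal S_h^*)g$ and the uniform boundedness principle applied to the family $h^{-1}(\mathcal S^*-\mathcal S_h^*)$. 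You flag this point yourself, so it is a repairable obligation rather than a gap, but it is the one place where your argument cannot simply cite the paper's theorems. What your route buys in exchange is economy: the forward estimate $\ssnm{(\mathcal S-\mathcal S_h)g}_{L^2(0,T;H)} \leqslant ch\,\ssnm{g}_{L^2(0,T;H)}$ comes for free by transposition, so \cref{lem:yh-stab}, the superconvergence lemma and the regularity \cref{eq:bary-regu} are not needed at all, making clear that only $O(h)$ (not $O(h^2)$) information on the state enters the final rate. One small repair: for a general filtration you cannot obtain $\mathcal S^*g = \alpha_1 p + \alpha_3 z$ by applying It\^o's formula to $(y(t),p(t))_H$, since a transposition solution need not be a semimartingale with respect to $W$; the identity should instead be read off from the defining relation \cref{eq:p-z}, exactly as in the proof of \cref{lem:lxy}, or cited from \cite[Theorem 8.1]{Lu2014} as the paper does for \cref{eq:optim_cond}.
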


To prove this theorem, we first introduce three lemmas.
\begin{lemma}
  \label{lem:yh-stab}
  Let $ y_h $ be the solution of the stochastic equation
  \begin{equation}
    \label{eq:yh-stab-1}
    \begin{cases}
      \mathrm{d}y_h(t) =
      (\Delta_h y_h + \alpha_0 y_h + g_h)(t) \, \mathrm{d}t +
      \alpha_2(t) y_h(t) \, \mathrm{d}W(t),
      \quad 0 \leqslant t \leqslant T, \\
      y_h(0) = 0,
    \end{cases}
  \end{equation}
  where $ g_h \in L_\mathbb F^2(0,T;\mathcal V_h) $. Then
  \begin{equation}
    \label{eq:yh-stab}
    \ssnm{y_h}_{L^2(0,T;H)} \leqslant
    c \ssnm{g_h}_{L^2(0,T;\dot H_h^{-2})}.
  \end{equation}
\end{lemma}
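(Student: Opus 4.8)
The plan is to establish \cref{eq:yh-stab} by a direct energy argument based on It\^o's formula, the decisive point being the choice of Lyapunov functional. Since the target controls the $ L^2(0,T;H) = L^2(0,T;\dot H_h^0) $ norm of $ y_h $ by the \emph{weaker} norm $ \ssnm{g_h}_{L^2(0,T;\dot H_h^{-2})} $, applying It\^o to $ \nm{y_h}_H^2 $ would only yield the standard estimate one level too high, namely $ \ssnm{y_h}_{L^2(0,T;\dot H_h^1)} \leqslant c\ssnm{g_h}_{L^2(0,T;\dot H_h^{-1})} $. Instead I would apply It\^o to $ V(t) := \nm{y_h(t)}_{\dot H_h^{-1}}^2 = ((-\Delta_h)^{-1} y_h(t), y_h(t))_H $, which is engineered so that the dissipative term $ \Delta_h y_h $ produces exactly $ -2\nm{y_h}_H^2 $ (the quantity we must bound), while the data $ g_h $ enters only through the pairing $ (y_h, (-\Delta_h)^{-1} g_h)_H $, controlled by $ \nm{y_h}_H\,\nm{g_h}_{\dot H_h^{-2}} $.

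First I would apply It\^o's formula to $ V(t) $. Using that $ -\Delta_h $ is self-adjoint and positive definite on $ \mathcal V_h $ and that $ \alpha_0(t), \alpha_2(t) $ are scalar multipliers commuting with $ (-\Delta_h)^{-1} $, the drift from $ \Delta_h y_h $, the term $ \alpha_0 y_h $ and the It\^o correction from the diffusion coefficient $ \alpha_2 y_h $ combine, after taking expectations and using $ y_h(0) = 0 $, into
\[
  \mathbb E\,V(\tau) + 2\,\mathbb E\!\int_0^\tau \nm{y_h}_H^2 \,\mathrm{d}t
  = \mathbb E\!\int_0^\tau \big[ (2\alpha_0 + \alpha_2^2)\,V
  + 2(y_h, (-\Delta_h)^{-1} g_h)_H \big]\,\mathrm{d}t ,
  \quad \tau \in [0,T].
\]
Applying Young's inequality $ 2(y_h,(-\Delta_h)^{-1}g_h)_H \leqslant \nm{y_h}_H^2 + \nm{g_h}_{\dot H_h^{-2}}^2 $ absorbs one copy of $ \nm{y_h}_H^2 $ into the left-hand side.

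Next, writing $ C_\alpha := \nm{2\alpha_0 + \alpha_2^2}_{L^\infty(0,T)} $ and discarding the nonnegative dissipation term, the identity reduces to $ \mathbb E\,V(\tau) \leqslant C_\alpha\int_0^\tau \mathbb E\,V(t)\,\mathrm{d}t + \ssnm{g_h}_{L^2(0,T;\dot H_h^{-2})}^2 $. Gronwall's inequality then gives the uniform bound $ \mathbb E\,V(\tau) \leqslant e^{C_\alpha T}\ssnm{g_h}_{L^2(0,T;\dot H_h^{-2})}^2 $, and feeding this back into the energy identity at $ \tau = T $ bounds $ \mathbb E\int_0^T\nm{y_h}_H^2\,\mathrm{d}t $ by $ (C_\alpha T e^{C_\alpha T}+1)\ssnm{g_h}_{L^2(0,T;\dot H_h^{-2})}^2 $. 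Taking square roots yields \cref{eq:yh-stab}; note that $ C_\alpha $ and the relevant spectral constants are independent of $ h $, so $ c $ is as well.

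The computation is routine once the functional is fixed, so the only genuinely delicate step is the justification that the martingale term has zero expectation: because $ g_h $ is merely square-integrable, the solution $ y_h $ has only $ L^2 $-integrability, which is not enough a priori to make $ \int_0^\cdot 2\alpha_2\nm{y_h}_{\dot H_h^{-1}}^2\,\mathrm{d}W $ a genuine martingale. I would handle this by localizing with the stopping times $ \tau_n := \inf\{t : \nm{y_h(t)}_{\dot H_h^{-1}} > n\}\wedge T $, writing the identity on $ [0,\tau\wedge\tau_n] $, and passing to the limit $ n\to\infty $; dominated convergence applies to the right-hand side (whose integrand lies in $ L^1(\Omega\times(0,T)) $ by Cauchy--Schwarz), while Fatou's lemma controls the nonnegative term $ \mathbb E\,V(\tau) $ in the direction needed for an upper bound, so that only the available $ L^2 $-integrability of $ y_h $ is used. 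An alternative would be a duality argument pairing $ y_h $ against the adjoint backward stochastic equation and invoking a discrete version of the $ \dot H^2 $-regularity in \cref{thm:regu}, but the energy method is preferable here since it is self-contained and makes the $ h $-independence of $ c $ transparent.
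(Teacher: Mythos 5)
Your proof is correct and essentially identical to the paper's: the Lyapunov functional $ V(t) = \ssnm{y_h(t)}_{\dot H_h^{-1}}^2 $ to which you apply It\^o's formula is exactly the paper's $ \ssnm{w_h(t)}_{H}^2 $ after its substitution $ w_h := (-\Delta_h)^{-1/2} y_h $, and both arguments then absorb the data term by Young's inequality (pairing $ \nm{y_h}_H $ against $ \nm{g_h}_{\dot H_h^{-2}} $), apply Gronwall's inequality, and feed the resulting uniform bound back into the energy identity at the terminal time. Your localization via stopping times is a careful spelling-out of what the paper compresses into ``a routine argument with the It\^o's formula.''
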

\begin{proof} 
  Letting $ w_h := (-\Delta_h)^{-1/2} y_h $, by \cref{eq:yh-stab-1} we have
  \begin{small}
  \begin{equation*}
    \begin{cases}
      \mathrm{d}w_h(t) =
      \big( \Delta_h w_h + \alpha_0 w_h + (-\Delta_h)^{-1/2}g_h \big)(t)
      \, \mathrm{d}t + \alpha_2(t) w_h(t)
      \, \mathrm{d}W(t), \quad 0 \leqslant t \leqslant T, \\
      w_h(0) = 0.
    \end{cases}
  \end{equation*}
  \end{small}
  A routine argument with the It\^o's formula then yields, for any $ 0 \leqslant t
  \leqslant T $,
  \begin{align}
      & \ssnm{w_h(t)}_{H}^2 + 2\ssnm{w_h}_{L^2(0,t;\dot H_h^1)}^2 \notag \\
    ={} &
    2 \int_0^t \big[
      w_h(s),
      \alpha_0(s) w_h(s) + (-\Delta_h)^{-1/2}g_h(s)
    \big] \, \mathrm{d}s +
    \ssnm{\alpha_2 w_h}_{L^2(0,t;H)}^2 \notag \\
    \leqslant{} &
    c \ssnm{w_h}_{L^2(0,t;H)}^2 +
    2 \ssnm{w_h}_{L^2(0,t;\dot H_h^1)}
    \ssnm{(-\Delta_h)^{-1/2}g_h}_{L^2(0,t;\dot H_h^{-1})} \notag \\
    \leqslant{} &
    c \ssnm{w_h}_{L^2(0,t;H)}^2 +
    \ssnm{w_h}_{L^2(0,t;\dot H_h^1)}^2 +
    \ssnm{(-\Delta_h)^{-1/2}g_h}_{L^2(0,t;\dot H_h^{-1})}^2 \notag \\
    ={} &
    c \ssnm{w_h}_{L^2(0,t;H)}^2 +
    \ssnm{w_h}_{L^2(0,t;\dot H_h^1)}^2 +
    \ssnm{g_h}_{L^2(0,t;\dot H_h^{-2})}^2.
    \label{eq:fuck}
  \end{align}
  It follows that
  \begin{align*}
    \ssnm{w_h(t)}_{H}^2 \leqslant
    c\ssnm{w_h}_{L^2(0,t;H)}^2 +
    \ssnm{g_h}_{L^2(0,t;\dot H_h^{-2})}^2
    \quad \forall 0 \leqslant t \leqslant T,
  \end{align*}
  and so using the Gronwall's inequality yields
  \begin{equation}
    \label{eq:fuck-1}
    \sup_{0 \leqslant t \leqslant T}
    \ssnm{w_h(t)}_H \leqslant c
    \ssnm{g_h}_{L^2(0,T;\dot H_h^{-2})}.
  \end{equation}
  In addition, inserting $ t = T $ into \cref{eq:fuck} gives
  \begin{equation}
    \label{eq:fuck-2}
    \ssnm{w_h}_{L^2(0,T;\dot H_h^1)}^2 \leqslant
    c \ssnm{w_h}_{L^2(0,T;H)}^2 +
    \ssnm{g_h}_{L^2(0,T;\dot H_h^{-2})}^2.
  \end{equation}
  Finally, combining \cref{eq:fuck-1}, \cref{eq:fuck-2} and the fact
  \[
    \ssnm{y_h}_{L^2(0,T;H)} = \ssnm{w_h}_{L^2(0,T;\dot H_h^1)},
  \]
  we readily obtain \cref{eq:yh-stab}.
\end{proof}

\begin{lemma} 
  Assume that $ (\bar u, \bar y) $ is the solution of problem \cref{eq:optim}.
  Let $ y_h $ be the mild solution of the stochastic equation
  \begin{small}
  \begin{equation}
    \label{eq:yh-def}
    \begin{cases}
      \mathrm{d}y_h(t) \!=\!
      (\Delta_h y_h \!+\! \alpha_0 y_h \!+\! \alpha_1  Q_h\bar u)(t)
      \mathrm{d}t \!+\!
      (\alpha_2 y_h \!+\! \alpha_3 Q_h \bar u)(t)  \mathrm{d}W(t),
      \, 0 \leqslant t \leqslant T, \\
      y_h(0) = 0.
    \end{cases}
  \end{equation}
  \end{small}
  Then
  \begin{equation}
    \label{eq:bary-yh}
    \ssnm{\bar y - y_h}_{L^2(0,T;H)}
    \leqslant ch^2.
  \end{equation}
\end{lemma}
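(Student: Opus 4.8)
The plan is to compare $y_h$ with the $L^2(\mathcal O)$-projection $Q_h\bar y$ of the continuous optimal state and to extract the extra power of $h$ from the negative-norm stability provided by \cref{lem:yh-stab}. Since $\alpha_0,\alpha_1,\alpha_2,\alpha_3$ depend only on $t$, the projection $Q_h$ commutes with multiplication by each $\alpha_i$; hence applying $Q_h$ to the state equation \cref{eq:state} satisfied by $(\bar u,\bar y)$ gives
\[
\mathrm{d}Q_h\bar y = (Q_h\Delta\bar y + \alpha_0 Q_h\bar y + \alpha_1 Q_h\bar u)\,\mathrm{d}t + (\alpha_2 Q_h\bar y + \alpha_3 Q_h\bar u)\,\mathrm{d}W.
\]
Subtracting this from \cref{eq:yh-def}, the control contributions $\alpha_1 Q_h\bar u$ and $\alpha_3 Q_h\bar u$ cancel exactly, and writing $e_h := y_h - Q_h\bar y$ together with the splitting $\Delta_h y_h - Q_h\Delta\bar y = \Delta_h e_h + (\Delta_h Q_h\bar y - Q_h\Delta\bar y)$ yields
\[
\begin{cases}
\mathrm{d}e_h = (\Delta_h e_h + \alpha_0 e_h + g_h)\,\mathrm{d}t + \alpha_2 e_h\,\mathrm{d}W, & 0 \leqslant t \leqslant T,\\
e_h(0) = 0,
\end{cases}
\]
where $g_h := \Delta_h Q_h\bar y - Q_h\Delta\bar y \in L_\mathbb F^2(0,T;\mathcal V_h)$, the membership following from $\bar y \in L_\mathbb F^2(0,T;\dot H^2)$ in \cref{eq:bary-regu}.

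This error equation is precisely of the form treated in \cref{lem:yh-stab} with source $g_h$, so $\ssnm{e_h}_{L^2(0,T;H)} \leqslant c\,\ssnm{g_h}_{L^2(0,T;\dot H_h^{-2})}$. Next I would rewrite the consistency term in its negative discrete norm. A direct computation gives $(-\Delta_h)^{-1} g_h = (R_h - Q_h)\bar y$, where $R_h := (-\Delta_h)^{-1}Q_h(-\Delta)$ is the Ritz projection onto $\mathcal V_h$; consequently
\[
\ssnm{g_h}_{\dot H_h^{-2}} = \ssnm{(R_h - Q_h)\bar y}_{H}.
\]

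The crux of the argument is to estimate this at second order. Since $\mathcal O$ is convex polygonal, both the $L^2(\mathcal O)$-projection and the Ritz projection enjoy the standard duality (Aubin--Nitsche) estimate $\nm{(I-Q_h)v}_H + \nm{(I-R_h)v}_H \leqslant c h^2 \nm{v}_{\dot H^2}$; applying this to $\bar y$ and invoking $\bar y \in L_\mathbb F^2(0,T;\dot H^2)$ gives
\[
\ssnm{g_h}_{L^2(0,T;\dot H_h^{-2})} = \ssnm{(R_h - Q_h)\bar y}_{L^2(0,T;H)} \leqslant c h^2 \ssnm{\bar y}_{L^2(0,T;\dot H^2)} \leqslant c h^2.
\]
Combining with the stability bound above and the triangle inequality $\ssnm{\bar y - y_h}_{L^2(0,T;H)} \leqslant \ssnm{(I-Q_h)\bar y}_{L^2(0,T;H)} + \ssnm{e_h}_{L^2(0,T;H)}$, where the first term is again $O(h^2)$ by the $L^2$-projection estimate, would complete the proof of \cref{eq:bary-yh}.

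I expect the main obstacle to be conceptual rather than computational: one must recognize that the first-order consistency term $g_h$ has to be measured in the weak $\dot H_h^{-2}$ norm, so that the negative-norm stability of \cref{lem:yh-stab} upgrades what would be an $O(h)$ energy estimate into the desired $O(h^2)$ bound, and one must check that this negative norm collapses exactly to the second-order projection error $\ssnm{(R_h - Q_h)\bar y}_{L^2(0,T;H)}$. The commutation of $Q_h$ with the time-dependent scalars $\alpha_i$, which is what makes the control terms cancel and leaves a purely deterministic consistency error driving $e_h$, is the secondary point that must be verified with care.
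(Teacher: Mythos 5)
Your proposal is correct and coincides with the paper's own argument: both compare $y_h$ with $Q_h\bar y$, exploit the commutation of $Q_h$ with the scalar coefficients $\alpha_i(t)$ to cancel the control terms, apply the negative-norm stability of \cref{lem:yh-stab} to the consistency source $\Delta_h Q_h\bar y - Q_h\Delta\bar y$, and collapse its $\dot H_h^{-2}$ norm to the second-order projection error via the identity $\Delta_h^{-1}Q_h\Delta = R_h$ together with the standard $O(h^2)$ estimates for $Q_h$ and the Ritz projection on the convex domain. The only difference is cosmetic: you name the Ritz projection $R_h$ explicitly, whereas the paper writes $\Delta_h^{-1}Q_h\Delta\bar y$ throughout.
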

\begin{proof}
  By definition we have
  \begin{small}
  \begin{equation*}
    \begin{cases}
      \mathrm{d}\bar y(t) =
      (\Delta \bar y + \alpha_0 \bar y + \alpha_1 \bar u)(t) \, \mathrm{d}t +
      (\alpha_2 \bar y + \alpha_3 \bar u)(t) \, \mathrm{d}W(t),
      \quad 0 \leqslant t \leqslant T, \\
      \bar y(0) = 0,
    \end{cases}
  \end{equation*}
  \end{small}
  so that
  \begin{small}
  \begin{equation*}
    \begin{cases}
      \mathrm{d}Q_h\bar y(t) =
      Q_h(\Delta \bar y + \alpha_0\bar y + \alpha_1 \bar u)(t) \, \mathrm{d}t +
      Q_h (\alpha_2\bar y + \alpha_3 \bar u)(t) \, \mathrm{d}W(t),
      \quad 0 \leqslant t \leqslant T, \\
      Q_h\bar y(0) = 0.
    \end{cases}
  \end{equation*}
  \end{small}
  Hence, by \cref{eq:yh-def} we get
  \begin{small}
  \begin{equation*}
    \begin{cases}
      \mathrm{d}e_h(t) =
      (\Delta_h e_h + \alpha_0e_h +
      \Delta_h Q_h\bar y - Q_h \Delta\bar y)(t) \, \mathrm{d}t +
      \alpha_2(t) e_h(t)
      \, \mathrm{d}W(t), \quad 0 \leqslant t \leqslant T, \\
      e_h(0) = 0,
    \end{cases}
  \end{equation*}
  \end{small}
  where $ e_h := y_h - Q_h \bar y $. By \cref{lem:yh-stab} we then obtain
  \begin{align*}
    \ssnm{e_h}_{L^2(0,T;H)} & \leqslant c
    \ssnm{\Delta_h Q_h\bar y - Q_h \Delta\bar y}_{L^2(0,T;\dot H_h^{-2})} \\
    & = c\ssnm{\Delta_h(Q_h\bar y - \Delta_h^{-1} Q_h \Delta\bar y)}_{
      L^2(0,T;\dot H_h^{-2})
    } \\
    & = c\ssnm{Q_h\bar y - \Delta_h^{-1} Q_h \Delta\bar y}_{
      L^2(0,T;H)
    }.
  \end{align*}
  It follows that
  \begin{align*}
    & \ssnm{\bar y - y_h}_{L^2(0,T;H)} \\
    \leqslant{} &
    \ssnm{\bar y - Q_h\bar y}_{L^2(0,T;H)} +
    \ssnm{e_h}_{L^2(0,T;H)} \\
    \leqslant{} &
    \ssnm{\bar y - Q_h \bar y}_{L^2(0,T;H)} +
    c \ssnm{Q_h\bar y - \Delta_h^{-1}Q_h\Delta\bar y}_{L^2(0,T;H)} \\
    \leqslant{} &
    c\ssnm{\bar y - Q_h \bar y}_{L^2(0,T;H)} +
    c\ssnm{\bar y - \Delta_h^{-1}Q_h\Delta \bar y}_{L^2(0,T;H)}.
  \end{align*}
  Hence, the desired estimate \cref{eq:bary-yh} follows from \cref{eq:bary-regu}
  and the standard estimate
  \[
    \nm{v - Q_hv}_{H} +
    \nm{v - \Delta_h^{-1}Q_h \Delta v}_{H} \leqslant
    c h^2 \nm{v}_{\dot H^2} \quad \forall v \in \dot H^2.
  \]
  This completes the proof.
\end{proof}

\begin{lemma}
  \label{lem:lxy}
  Assume that $ g_h, v_h \in L_\mathbb F^2(0,T;\mathcal V_h) $.
  Let $ (p_h,z_h) $ be the transposition solution of the equation
  \begin{equation*} 
    \begin{cases}
      \mathrm{d}p_h(t) =
      -\big(
        \Delta_h p_h + \alpha_0 p_h + g_h + \alpha_2 z_h
      \big)(t) \, \mathrm{d}t +
      z_h(t) \, \mathrm{d}W(t), \, 0 \leqslant t \leqslant T, \\
      p_h(T) = 0,
    \end{cases}
  \end{equation*}
  and let $ y_h $ be the mild solution of the equation
  \begin{equation*}
    \begin{cases}
      \mathrm{d}y_h(t) \!=\!
      (\Delta_h y_h \!+\! \alpha_0 y_h \!+\! \alpha_1  v_h)(t)
      \mathrm{d}t \!+\!
      (\alpha_2 y_h \!+\! \alpha_3 v_h)(t)  \mathrm{d}W(t),
      \, 0 \leqslant t \leqslant T, \\
      y_h(0) = 0.
    \end{cases}
  \end{equation*}
  Then
  \begin{equation}
    \label{eq:lxy}
    \int_0^T \big[
      (\alpha_1 p_h + \alpha_3 z_h)(t), \, v_h(t)
    \big] \, \mathrm{d}t =
    \int_0^T \big[ g_h(t), \, y_h(t) \big] \, \mathrm{d}t.
  \end{equation}
\end{lemma}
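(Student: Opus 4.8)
The plan is to obtain \cref{eq:lxy} directly from the transposition characterization of $ (p_h,z_h) $, by inserting into it the test processes that reconstruct the mild-solution representation of $ y_h $. Since every object lives in the finite-dimensional space $ \mathcal V_h $ and the coefficients $ \alpha_0,\dots,\alpha_3 $ are scalar functions in $ L^\infty(0,T) $, the forward equation for $ y_h $ can be recast as
\[
  \mathrm{d}y_h(t) = \big( \Delta_h y_h + \widetilde g_h \big)(t)\,\mathrm{d}t + \widetilde\sigma_h(t)\,\mathrm{d}W(t), \quad y_h(0)=0,
\]
with $ \widetilde g_h := \alpha_0 y_h + \alpha_1 v_h $ and $ \widetilde\sigma_h := \alpha_2 y_h + \alpha_3 v_h $. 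As $ y_h,v_h \in L_\mathbb F^2(0,T;\mathcal V_h) $ and $ \alpha_i \in L^\infty(0,T) $, both $ \widetilde g_h $ and $ \widetilde\sigma_h $ belong to $ L_\mathbb F^2(0,T;\mathcal V_h) $, hence are admissible test processes. Comparing with \cref{eq:S0h-int,eq:S1h-int} and using that $ Q_h $ acts as the identity on $ \mathcal V_h $-valued processes, I would first record the mild representation $ y_h = S_0^h\widetilde\sigma_h + S_1^h\widetilde g_h $.

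Next I would apply the transposition identity \cref{eq:ph-zh} for $ (p_h,z_h) $, specialized to the backward equation at hand --- whose terminal value is $ 0 $ and whose source in the role of $ f $ is $ \alpha_0 p_h + g_h + \alpha_2 z_h $ --- taking $ t = 0 $, filling the two test slots with $ \widetilde g_h $ and $ \widetilde\sigma_h $, and setting the $ \mathcal F_t $-measurable test variable to zero. With these choices the factor $ (S_0^h\widetilde\sigma_h + S_1^h\widetilde g_h)(s) $ on the right-hand side collapses to $ y_h(s) $, while the terminal pairing vanishes because $ p_h(T)=0 $. The identity then reduces to
\[
  \int_0^T \big[ p_h, \widetilde g_h \big] + \big[ z_h, \widetilde\sigma_h \big] \, \mathrm{d}s = \int_0^T \big[ \alpha_0 p_h + g_h + \alpha_2 z_h, \, y_h \big] \, \mathrm{d}s.
\]

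Finally I would expand both sides using $ \widetilde g_h = \alpha_0 y_h + \alpha_1 v_h $ and $ \widetilde\sigma_h = \alpha_2 y_h + \alpha_3 v_h $. Because each $ \alpha_i $ is a scalar function of $ t $, one has $ \big[ p_h, \alpha_0 y_h \big] = \big[ \alpha_0 p_h, y_h \big] $ and $ \big[ z_h, \alpha_2 y_h \big] = \big[ \alpha_2 z_h, y_h \big] $, so the $ \alpha_0 $- and $ \alpha_2 $-contributions cancel between the two sides, leaving exactly
\[
  \int_0^T \big[ \alpha_1 p_h + \alpha_3 z_h, \, v_h \big] \, \mathrm{d}s = \int_0^T \big[ g_h, \, y_h \big] \, \mathrm{d}s,
\]
which is \cref{eq:lxy}. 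I do not anticipate a genuinely difficult step: the only points requiring care are checking that $ \widetilde g_h,\widetilde\sigma_h $ are legitimate test processes (guaranteed by $ y_h \in L_\mathbb F^2(0,T;\mathcal V_h) $, which is part of $ y_h $ being a mild solution; cf. the stability estimate \cref{lem:yh-stab}) and verifying the representation $ y_h = S_0^h\widetilde\sigma_h + S_1^h\widetilde g_h $; everything after that is bookkeeping with the scalar coefficients.
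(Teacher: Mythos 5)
Your proposal is correct and follows essentially the same route as the paper's own proof: write $y_h = S_0^h(\alpha_2 y_h + \alpha_3 v_h) + S_1^h(\alpha_0 y_h + \alpha_1 v_h)$, insert these processes as the test pair in the transposition identity \cref{eq:ph-zh} with $t=0$ and vanishing terminal datum, and cancel the $\alpha_0$- and $\alpha_2$-terms using that the coefficients are scalar. The only difference is that you spell out the admissibility of the test processes and the cancellation bookkeeping, which the paper leaves implicit.
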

\begin{proof} 
  Note that
  \[
    y_h = S_0^h(\alpha_2 y_h + \alpha_3 v_h) +
    S_1^h(\alpha_0 y_h + \alpha_1 v_h).
  \]
  By definition (see \cref{eq:ph-zh}), we then obtain
   \begin{align*}
     & \int_0^T \big[p_h(t), \, (\alpha_0 y_h + \alpha_1 v_h)(t) \big] +
     \big[z_h(t), \, (\alpha_2 y_h + \alpha_3 v_h)(t)\big] \, \mathrm{d}t \\
     ={} &
     \int_0^T \big[
       (\alpha_0 p_h + g_h + \alpha_2 z_h)(t), \, y_h(t)
     \big] \, \mathrm{d}t,
   \end{align*}
   which implies the desired equality \cref{eq:lxy}.
   This completes the proof.
\end{proof}

Finally, we are in a position to show the proof of \cref{thm:optim-conv} as
follows.

\medskip\noindent{\bf Proof of \cref{thm:optim-conv}}. Firstly, we present some
preliminary results. Let $ y_h $ be the mild solution of \cref{eq:yh-def}, let $
(\bar p, \bar z) $ and $ (\bar p_h, \bar z_h) $ be the transposition solutions
of \cref{eq:barp-barz,eq:barph-barzh}, respectively, and let $ (p_h,z_h) $ be
the transposition solution of the spatially semi-discrete backward stochastic
parabolic equation
  \begin{small}
  \begin{equation} 
    \begin{cases}
      \mathrm{d}p_h(t) =
      -\big(
        \Delta_h p_h + \alpha_0 p_h + Q_h(\bar y - y_d) + \alpha_2 z_h
      \big)(t) \, \mathrm{d}t +
      z_h(t) \, \mathrm{d}W(t), \, 0 \leqslant t \leqslant T, \\
      p_h(T) = 0.
    \end{cases}
  \end{equation}
  \end{small}
  By \cref{lem:lxy}, it is easy to verify that
  \begin{equation}
    \label{eq:barph-barph-ph-yh}
    \int_0^T \big[
      \alpha_1(p_h - \bar p_h) + \alpha_3(z_h-\bar z_h), \,
      \bar u_h - \bar u
    \big] \, \mathrm{d}t = \int_0^T
    [\bar y - \bar y_h, \, \bar y_h - y_h] \, \mathrm{d}t.
  \end{equation}
  By \cref{thm:conv} we have
  \begin{equation}
    \label{eq:barp-ph}
    \sup_{0 \leqslant t \leqslant T}
    \ssnm{(\bar p-p_h)(t)}_{H} +
    \ssnm{\bar p-p_h}_{L^2(0,T;\dot H^1)} +
    \ssnm{\bar z-z_h}_{L^2(0,T;H)} \leqslant ch.
  \end{equation}

  Secondly, we use the standard argument in the numerical analysis of
  optimization with PDE constraints (see \cite[Theorem 3.4]{Hinze2009}) to prove
  \cref{eq:optim-conv}. By \cref{eq:optim_cond} we obtain
  \begin{equation}
    \label{eq:500}
    \nu \int_0^T [\bar u, \bar u - \bar u_h] \, \mathrm{d}t =
    \int_0^T [
      \alpha_1 \bar p + \alpha_3 \bar z,
      \bar u_h - \bar u
    ] \, \mathrm{d}t,
  \end{equation}
  and by \cref{eq:optim_cond_h} we get
  \begin{equation}
    \label{eq:501}
    -\nu \int_0^T [\bar u_h, \bar u - \bar u_h] \, \mathrm{d}t
    = \int_0^T [
      \alpha_1 \bar p_h + \alpha_3 \bar z_h, \bar u - \bar u_h
    ] \, \mathrm{d}t.
  \end{equation}
  Summing up the above two equalities yields
  \begin{align*}
    \nu \ssnm{\bar u - \bar u_h}_{L^2(0,T;H)}^2
    & = \int_0^T \big[
      \alpha_1(\bar p - \bar p_h) + \alpha_3(\bar z - \bar z_h), \,
      \bar u_h - \bar u
    \big] \, \mathrm{d}t \\
    &= \mathbb I_1 + \mathbb I_2,
  \end{align*}
  where
  \begin{align*}
    \mathbb I_1 &:= \int_0^T \big[
      \alpha_1(\bar p - p_h) + \alpha_3(\bar z - z_h), \,
      \bar u_h - \bar u
    \big] \, \mathrm{d}t, \\
    \mathbb I_2 &:= \int_0^T \big[
      \alpha_1(p_h - \bar p_h) + \alpha_3(z_h - \bar z_h), \,
      \bar u_h - \bar u
    \big] \, \mathrm{d}t.
  \end{align*}
  For $ \mathbb I_1 $ we have
  \begin{align*}
    \mathbb I_1 & \leqslant c \big(
      \ssnm{\bar p - p_h}_{L^2(0,T;H)} +
      \ssnm{\bar z - z_h}_{L^2(0,T;H)}
    \big) \ssnm{\bar u - \bar u_h}_{L^2(0,T;H)} \\
    & \leqslant c h \ssnm{\bar u - \bar u_h}_{L^2(0,T;H)}
    \quad \text{(by \cref{eq:barp-ph}).}
  \end{align*}
  For $ \mathbb I_2 $ we have
  \begin{align*}
    \mathbb I_2 &= \int_0^T [
      \bar y - \bar y_h, \bar y_h - y_h
    ] \, \mathrm{d}t \quad \text{(by \cref{eq:barph-barph-ph-yh})} \\
    & =
    -\ssnm{\bar y - \bar y_h}_{L^2(0,T;H)}^2 +
    \int_0^T [\bar y - \bar y_h, \bar y - y_h] \, \mathrm{d}t \\
    & \leqslant
    -\frac12 \ssnm{\bar y - \bar y_h}_{L^2(0,T;H)}^2 +
    \frac12 \ssnm{\bar y - y_h}_{L^2(0,T;H)}^2 \\
    & \leqslant
    -\frac12 \ssnm{\bar y - \bar y_h}_{L^2(0,T;H)}^2 +
    ch^4 \quad\text{(by \cref{eq:bary-yh}).}
  \end{align*}
  Combining the above estimates of $ \mathbb I_1 $ and $ \mathbb I_2 $ yields
  \begin{align*}
    & \nu\ssnm{\bar u - \bar u_h}_{L^2(0,T;H)}^2 +
    \frac12 \ssnm{\bar y - \bar y_h}_{L^2(0,T;H)}^2 \\
    \leqslant{} &
    ch \ssnm{\bar u - \bar u_h}_{L^2(0,T;H)} + ch^4 \\
    \leqslant{} &
    ch^2 + \frac\nu2 \ssnm{\bar u - \bar u_h}_{L^2(0,T;H)}^2,
  \end{align*}
  which implies \cref{eq:optim-conv}. This completes the proof.
\hfill\ensuremath{
  \vbox{\hrule height0.6pt\hbox{%
    \vrule height1.3ex width0.6pt\hskip0.8ex
    \vrule width0.6pt}\hrule height0.6pt
  }
}

\subsection{Numerical results}
Let $ J \geqslant 2 $ be a positive integer. Set $ t_j := j\tau $ for each $
0 \leqslant j \leqslant J $, where $ \tau := T/J $. Define
\[ 
  L_{\mathbb F,\tau}^2(0,T;\mathcal V_h) := \big\{
    U \in L_\mathbb F^2(0,T;\mathcal V_h) \mid
    \text{$U$ is constant on $ [t_j,t_{j+1}) $, $ \forall 0 \leqslant j < J $}
  \big\}.
\]
For each $ 0 \leqslant j < J $, define $ \delta W_j := W(t_{j+1}) -
W(t_j) $, and let $ \mathrm{Net}_j: \mathbb R^d \times \mathbb R^j \to
\mathbb R $ be a neural network. Define the control $ U \in L_{\mathbb
F,\tau}^2(0,T;\mathcal V_h) $ as follows:
\begin{align*} 
  & \qquad\qquad
  U(0,x) := \mathrm{Net}_0(x) \quad
  \text{for each interior node $ x $ of $ \mathcal K_h $}; \\
  &\text{for any $ 1 \leqslant j < J $}, \\
  & \qquad\qquad
  U(t_j,x) := \mathrm{Net}_j(x,\delta W_0, \ldots, \delta W_{j-1}) \quad
  \text{for each interior node $ x $ of $ \mathcal K_h $}.
\end{align*}
The numerical optimal control $ \bar U $ is obtained by training these neural
networks $ \mathrm{Net}_j $ with the loss function
\[
  \mathcal L_{h,\tau} (U) :=
  \frac12
    \sum_{j=0}^{J-1} \ssnm{Y-y_d(t_j)}_{L^2(t_j,t_{j+1};H)}^2 +
  \frac\nu2 \ssnm{U}_{L^2(0,T;H)}^2,
\]
where the numerical state $ Y \in L_{\mathbb F,\tau}^2(0,T;\mathcal V_h) $ is
calculated as follows:
\begin{equation*} 
  \begin{cases}
    Y(t_{j+1}) - Y(t_j) =  \tau \Delta_h Y(t_{j+1}) +
    \tau (\alpha_0 Y + \alpha_1 U)(t_j)  {} \\
    \qquad\qquad\qquad\qquad\qquad\qquad
   + ( \alpha_2 Y + \alpha_3 U)(t_j) \delta W_j,
    \qquad 0 \leqslant j < J, \\
    Y(0) = 0.
  \end{cases}
\end{equation*}

Our numerical experiment adopts the following settings: $ \mathcal O = (0,1) $,
$
T=\num{2e-1} $, $ \nu = \num{1e-2} $, $ \alpha_0 = \alpha_1 = \alpha_2 = 1 $, $
\alpha_3 = \num{1e-1} $; each $ \mathrm{Net}_j $, $ 0 \leqslant j < J $, is a
fully connected feedforward neural network with four hidden layers,
where each hidden layer possesses $ 200 $ neurons with the ReLU activation
function; in the computation of each numerical optimal control, these neural
networks $ \mathrm{Net}_j $ are trained by the Adam optimization algorithm with
$ 5000 $ iterations, where each iteration uses $ 128 $ paths. The numerical
experiment is performed by means of PyTorch with single precision.

In \cref{tab:ex1,tab:ex2}, the reference control $ U^* $ is the numerical
control with $ h = 1/64 $, $ \bar Y $ and $ Y^* $ are the numerical states of
the controls $ \bar U $ and $ U^* $, respectively, and $ \ssnm{\cdot} $ denotes
the norm $ \ssnm{\cdot}_{L^2(0,T;H)} $. In addition, the norm $ \ssnm{\cdot}
_{L^2(0,T;H)} $ is calculated by $ \num{2.56e6} $ paths. The numerical results
in \cref{tab:ex1,tab:ex2} demonstrate that
\[
  \ssnm{\bar U - U^*}_{L^2(0,T;H)} +
  \ssnm{\bar Y - Y^*}_{L^2(0,T;H)}
\]
is close to $ O(h) $, which agrees well with \cref{thm:optim-conv}.

\begin{table}[H]
  \footnotesize \setlength{\tabcolsep}{2pt}
  \caption{ Numerical results with
    $ y_d(t,x) = x^{-0.49}, \,\, (t,x) \in [0,T] \times \mathcal O $.
  } \label{tab:ex1}
  \begin{center}
    \begin{tabular}{c@{\hspace{8pt}}cc@{\hspace{8pt}}ccc@{\hspace{10pt}}cc@{\hspace{8pt}}cc}
      \toprule
      & \multicolumn{4}{c}{$ J=50 $} & &
      \multicolumn{4}{c}{$ J=80 $} \\
      \cmidrule{2-5} \cmidrule{7-10}
      $ h $ & $ \ssnm{\bar U - U^*}$ & Order & $ \ssnm{\bar Y-Y^*} $ & Order & &
      $ \ssnm{\bar U-U^*}$ & Order & $ \ssnm{\bar Y-Y^*} $ & Order \\
      \hline
      $  1/4  $ & $ \num{3.12e-1} $ & --       & $\num{9.14e-3}$ & --   &  & $ \num{3.10e-1} $  & --       & $ \num{9.37e-3}$  & --   \\
      $ 1/8 $   & $ \num{1.43e-1} $ & $ 1.12 $ & $\num{4.80e-3}$ & 0.93 &  & $ \num{1.45e-1} $  & $ 1.10 $ & $ \num{4.80e-3} $ & 0.97 \\
      $ 1/16 $  & $ \num{6.82e-2} $ & $ 1.07 $ & $\num{1.95e-3}$ & 1.30 &  & $ \num{7.36e-2} $  & $ 0.98 $ & $ \num{2.29e-3} $ & 1.07 \\
      $ 1/32 $  & $ \num{3.87e-2} $ & $ 0.82 $ & $\num{9.37e-4}$ & 1.06 &  & $ \num{4.24e-2} $  & $ 0.80 $ & $ \num{1.07e-3} $ & 1.10 \\
      \bottomrule
    \end{tabular}
  \end{center}
\end{table}

\begin{table}[H]
  \footnotesize \setlength{\tabcolsep}{2pt}
  \caption{ Numerical results with 
    $ y_d(t,x) = \big( 1+W(t)^2 \big) x^{-0.49},
    \,\, (t,x) \in [0,T] \times \mathcal O $.
  } \label{tab:ex2}
  \begin{center}
    \begin{tabular}{c@{\hspace{8pt}}cc@{\hspace{8pt}}ccc@{\hspace{10pt}}cc@{\hspace{8pt}}cc}
      \toprule
      & \multicolumn{4}{c}{$ J=50 $} & &
      \multicolumn{4}{c}{$ J=80 $} \\
      \cmidrule{2-5} \cmidrule{7-10}
      $ h $ & $ \ssnm{\bar U - U^*}$ & Order & $ \ssnm{\bar Y-Y^*} $ & Order & &
      $ \ssnm{\bar U-U^*}$ & Order & $ \ssnm{\bar Y-Y^*} $ & Order \\
      \hline
      $ 1/4 $  & $ \num{3.69e-1} $ & --       & $\num{1.14e-2}$ & --   &  & $ \num{3.63e-1} $ & --       & $  \num{1.13e-2}$ & --   \\
      $ 1/8 $  & $ \num{1.66e-1} $ & $ 1.16 $ & $\num{5.97e-3}$ & 0.93 &  & $ \num{1.73e-1} $ & $ 1.07 $ & $ \num{5.92e-3} $ & 0.94 \\
      $ 1/16 $ & $ \num{8.38e-2} $ & $ 0.98 $ & $\num{2.97e-3}$ & 1.01 &  & $ \num{9.35e-2} $ & $ 0.89 $ & $ \num{2.93e-3} $ & 1.01 \\
      $ 1/32 $ & $ \num{4.53e-2} $ & $ 0.89 $ & $\num{1.29e-3}$ & 1.20 &  & $ \num{5.19e-2} $ & $ 0.85 $ & $ \num{1.41e-3} $ & 1.06 \\
      \bottomrule
    \end{tabular}
  \end{center}
\end{table}

\section{Conclusion}
For the backward semilinear stochastic parabolic equation with general
filtration, we have derived the higher regularity of the solution to the
continuous problem, and obtained the first-order accuracy of the spatial
semi-discretization with the linear finite element method. The derived
theoretical results have been applied to a general stochastic linear quadratic
control problem, and the first-order spatial accuracy has been derived for a
spatially semi-discrete stochastic linear quadratic control problem. 
\bibliographystyle{plain}

\end{document}